\documentclass[a4paper,10pt]{article}

\usepackage[utf8]{inputenc}   
\usepackage[T1]{fontenc} 
\usepackage{amsmath,amsthm}
\usepackage[english]{babel}
\usepackage[dvips]{graphicx}
\usepackage{amsfonts,amssymb}
\usepackage{geometry}
\geometry{a4paper,hmargin=3.1cm, vmargin=3.5cm }
\usepackage{hyperref}
\usepackage{stmaryrd}
\usepackage{fancyhdr}
\usepackage{url}

\newtheorem*{conj*}{Conjecture}
\newtheorem*{claim}{Claim}
\newtheorem{prop}{Proposition}[section]
\newtheorem{LM}{Lemma}[section]
\newtheorem*{st}{}
\newtheorem{thm}{Theorem}[section]

\newtheorem{cor}{Corollary}[section]

\newtheoremstyle{pourlesremarques}{\topsep}{\topsep}{\normalfont}{}{\bfseries}{.}{ }{}
\theoremstyle{pourlesremarques}
\newtheorem{rem}{Remark}[section]
\newtheoremstyle{pourlesexemples}{\topsep}{\topsep}{\normalfont}{}{\bfseries}{.}{ }{}
\theoremstyle{pourlesexemples}

\renewcommand{\o}{\mathfrak{O}_F}
\newcommand{\p}{\mathfrak{P}_F}
\newcommand{\w}{\varpi_F}
\renewcommand{\d}{\delta}

\title {\textbf{Derivatives and asymptotics of Whittaker functions}}
\author{Nadir MATRINGE\footnote{Nadir Matringe, University of East Anglia, School of Mathematics, Norwich, UK, NR4 7TJ. Email: n.matringe@uea.ac.uk}}

\begin{document}
\maketitle

\begin{abstract}
Let $F$ be a $p$-adic field, and $G_n$ one of the groups $GL(n,F)$, $GSO(2n-1,F)$, $GSp(2n,F)$, or $GSO(2(n-1),F)$. Using the mirabolic subgroup or 
analogues of it, and related ``derivative'' functors, we give an asymptotic expansion of functions in the Whittaker model of generic representations 
of $G_n$, with respect to a minimal set of characters of subgroups of the maximal torus. Denoting by $Z_n$ the center of $G_n$, and by $N_n$
 the unipotent radical of its standard Borel subgroup, we characterize generic representations occurring in $L^2(Z_nN_n\backslash G_n)$ in terms of these characters.\\ 
This is related to a conjecture of Lapid and Mao for general split groups, asserting that the generic representations occurring in $L^2(Z_nN_n\backslash G_n)$ are 
the generic discrete series; we prove it for the group $G_n$.
 \end{abstract}

\section*{Introduction}

Let $G_n$ be the points of one of the groups $GL(n)$, $GSO(2n-1)$, $GSp(2n)$, or $GSO(2(n-1))$ over a $p$-adic field $K$. The main result (Theorem \ref{restorus}) 
of this work 
describes the asymptotic behaviour of the restriction of Whittaker functions to the standard maximal torus, in terms of a 
family of characters which is minimal in some sense. From results of \cite{LM}, this restriction can be described for split reductive 
groups in terms of cuspidal exponents.\\ 
Here, after having defined analogues of the mirabolic subgroup for the groups $G_n$, and the corresponding derivative functors, following \cite{CP} 
(where the case of completely reducible derivatives is treated for $GL(n)$), we choose to describe the restriction of Whittaker functions to the torus
 in terms of central exponents of the derivatives.\\ 
This description, inspired by \cite{B}, is better
adapted to understanding when the Whittaker model of a unitary generic representation is a subspace of 
$L^2(Z_n N_n\backslash G_n)$ (these representations are conjectured to be generic discrete series by Lapid and Mao).\\ 
In the first section, we review the groups in question and define their mirabolic subgroups. We also give a decomposition of the unipotent
 radical of the 
standard Borel subgroup, and a description of how nondegenerate characters of this radical behave with respect to this decomposition.\\
In Section 2, we give properties of the derivative functors, and use them to prove our asymptotic expansion of Whittaker functions, which is 
Theorem \ref{restorus}.\\
In Section 3, we characterize generic representations with
Whittaker model included in $L^2(Z_n N_n\backslash G_n)$ in terms of central exponents of the derivatives, in Corollary \ref{l2g}. We then 
prove in Theorem \ref{l2'}
the conjecture 3.5 of \cite{LM}.\\
After the writing of this paper, Patrick Delorme informed us that he obtained 
the proof of the conjecture of Lapid and Mao in the general case (Theorem 8 of \cite{D2}).

\section{Mirabolic subgroup and nondegenerate characters}

Let $F$ be nonarchimedean local field, we denote by $\o$ its ring of integer, and by 
$\p=\w\o$ the maximal ideal of this ring, where $\w$ is a uniformiser of $F$.\\ 
We give a list of groups, and describe some of their properties which will be used in the sequel:\\

\begin{itemize}
\item \textbf{Case A:}\\
The group $G_0$ is trivial, and for $n\geq1$, the group $G_n$ is $GL(n,F)$. \\

We consider the maximal torus of $G_{n}$ consisting of diagonal matrices, it is isomorphic to $(F^*)^{n}$.\\
For $n\geq2$, the simple roots of this group can be chosen to be the characters $$\alpha_{i}(diag(x_1,\dots,x_n))= x_i x_{i+1}^{-1}$$ 
for $i$ between $1$ and $n-1$.\\
The root subgroup $U_{\alpha_i}$ is given by matrices of the form $I_n + xE_{i,i+1}$ for $x$ in $F$.\\

Standard Levi subgroups of $G_n$
 are given by matrices of the form 
$diag(a_1,\dots,a_r)$ 
where $a_i$ belongs to $GL(n_i,F)$, with $n_1+\dots + n_r=n$. We denote the preceding group by $M_{(n_1,\dots,n_r)}$, and the corresponding standard 
parabolic subgroup is denoted by
 $P_{(n_1,\dots,n_r)}$, with unipotent radical $U_{(n_1,\dots,n_r)}$.\\
 
For $n\geq2$ we denote by $U_{n}$ the group $U_{(n-1,1)}$, of matrices of the form $\begin{bmatrix} 
                                                                                         I_{n-1}    & V\\
                                                                                                    & 1 \end{bmatrix}$.

It is isomorphic to $F^{n-1}$.\\

For $n>k\geq 1$, the group $G_{k}$ embeds naturally in $G_{n}$, and is given by matrices of the form 
$diag(g,I_{n-k})$; we denote by $Z_k$ its center.\\
We denote by $P_{n}$ the mirabolic subgroup $G_{n-1}\ltimes U_{n}$.

\item \textbf{Case B:}\\
The group $G_0$ is trivial.\\
For $n\geq2$, the group $G_n=GO(2n-1,F)$ is the group of matrices $g$ in $GL(2n-1,F)$ such that $^t\!g J g$ belongs to $F^*J$. 
We call the multiplier of an element $g$ in $G_n$ the scalar $\mu(g)$ such that $^t\!g J g$ is equal to 
$\mu(g)J$ (one checks that for this group, the multiplier actually belongs to $(F^*)^2$), where 
$J$ is the antidiagonal matrix of $GL(2n-1,F)$ with ones on the second diagonal. It is the direct product of 
$SO(2n-1,F)$ with $F^*$, more precisely, of $SO(2n-1,F)$ and the group $I(F^*)$ of matrices $I(t)=tI_{2n-1}$ for $t$ in $F^*$.\\

The maximal torus of $G_{n}$ is equal to the product of the torus of matrices of the form $diag(x_{n-1}^{-1},\dots,x_1^{-1},1,x_1,\dots,x_{n-1})$
 with $I(F^*)$ and is isomorphic to $(F^*)^{n}$.\\
For $n\geq3$, the simple roots of this group can be chosen to be the characters 
$$\alpha_{i+1}(diag(tx_{n-1}^{-1},\dots,tx_1^{-1},t,tx_1,\dots,tx_{n-1}))=x_i x_{i+1}^{-1}$$ 
for $i$ between $1$ and $n-2$, and 
$\alpha_1(diag(tx_{n-1}^{-1},\dots,tx_1^{-1},t,tx_1,\dots,tx_{n-1}))= x_1^{-1}$.\\
The root subgroups $U_{\alpha_i}$ are given by matrices of the form $$diag(u,\dots,1,u,1,\dots,1,u^{-1},1,\dots,1)$$
 for matrices $u$ in the unipotent radical
 of the Borel of $GL(2,F)$.\\

For $n > k \geq 1$, the group $G_{k}$ embeds naturally in $G_n$, and is given by matrices of the form 
$diag(\mu(g)I_{k},g,I_{n-k})$, where $\mu(g)$ 
is the multiplier of the element $g$ of $G_k$.\\
 Its center $Z_k$ is given by matrices of the form 
$z_k(t)=diag( t^2 I_{n-k},tI_{2k-1},I_{n-k})$ for $t$ in $K^*$. \\

For $n\geq1$ , the standard Levi subgroups of $G_n$ are given by matrices of the form 
$$diag(\mu(g)^\tau\!a_{r-1}^{-1},\dots,\mu(g)^\tau\!a_1^{-1},g,a_1,\dots,a_r),$$
where $a_i$ belongs to $GL(n_i,F)$, $^\tau\!a$ is the transpose of $a$ with respect to the second diagonal, $g$ belongs to $G_m$,
 with $2m-1+2n_1+\dots + 2n_r=2n-1$. We denote the preceding group by $M_{(m;n_1,\dots,n_r)}$, and the corresponding standard parabolic subgroup consisting 
 of block upper triangular matrices is denoted by
 $P_{(m;n_1,\dots,n_r)}$, with unipotent radical $U_{(m;n_1,\dots,n_r)}$.\\

For $n\geq2$ we denote by $U_{n}$ the group $U_{(n-1;1)}$, of matrices of the form $\begin{bmatrix} 1 & -^\tau\!V  &  -^\tau\!V V/2 \\
                                                                                       &   I_{2n-3}    & V\\
                                                                                       &              & 1 \end{bmatrix}$.\\

It is isomorphic to $F^{2n-3}$.\\

For $n\geq2$, we denote by $P_n$ the ``mirabolic'' subgroup $G_{n-1}\ltimes U_{n}$.

\item \textbf{Case C:}\\
The group $G_0$ is trivial, the group $G_1$ is $F^*$.\\
For $n\geq2$, the group $G_n=GSp(2(n-1),F)$, where $GSp(2(n-1),F)$ is the group of matrices $g$ in $GL(2(n-1),F)$ such that $^t\!g J g$ belongs to $F^*J$, where 
$J=\begin{bmatrix} 0 & W \\ -W & 0 \end{bmatrix}$ and $W$ is the antidiagonal matrix of $GL(n-1,F)$ with ones on the second diagonal. It is the semi-direct product of 
$Sp(2(n-1),F)$ with $F^*$, more precisely, of $Sp(2n,F)$ and the group $I(F^*)$ of matrices $I(t)=diag(tI_{n-1},I_{n-1})$ for $t$ in $F^*$.\\

The maximal torus of $G_{n}$ is equal to the product of the torus of matrices of the form $diag(x_{n-1}^{-1},\dots,x_1^{-1},x_1,\dots,x_{n-1})$
 with $I(F^*)$ and is isomorphic to $(F^*)^{n}$.\\
The simple roots of this group are the characters $\alpha_{i+1}(diag(tx_{n-1}^{-1},\dots,tx_1^{-1},x_1,\dots,x_{n-1}))=x_i x_{i+1}^{-1}$ for $i$ between $1$ and $n-2$, and 
$\alpha_1(diag(tx_{n-1}^{-1},\dots,tx_1^{-1}, x_1,\dots, x_{n-1}))= tx_1^{-2} $.\\
For $i$ less than $n$, the root subgroup $U_{\alpha_i}$ is given by matrices of the form $$diag(1,\dots,1,u,1,\dots,1,u^{-1},1,\dots,1),$$
 for matrices $u$ in the unipotent radical
 of the Borel of $GL(2,F)$, whereas $U_{\alpha_n}$ is given by matrices $diag(1,\dots,1,u,1,\dots,1)$, for matrices $u$ in the unipotent radical
 of the Borel of $GL(2,F)$.\\ 

For $n > k \geq 2$, the group $G_{k}$ embeds naturally in $G_n$, and is given by matrices of the form 
$diag(\mu(g)I_{n-k},g,I_{n-k})$, where $\mu(g)$ 
is the multiplier of the element $g$ of $G_k$. Its center $Z_k$ is given by matrices of the form 
$z_k(t)=diag( t^2 I_{n-k},tI_{2(k-1)},I_{n-k})$ for $t$ in $F^*$. The group $G_1$, which is equal to its center $Z_1$, embeds as $I(F^*)$.\\

For $n\geq1$, the standard Levi subgroups of $G_n$ are:
\begin{itemize}
\item either matrices $$diag(\mu(g)^\tau\!a_{r-1}^{-1},\dots,\mu(g)^\tau\!a_1^{-1},g,a_1,\dots,a_r),$$

where $a_i$ belongs to $GL(n_i,F)$, $g$ belongs to $G_m$ with $m\geq2$,
 with $2(m-1)+2n_1+\dots + 2n_r=2(n-1)$. We denote the preceding group by $M_{(m;n_1,\dots,n_r)}$, and the corresponding standard parabolic subgroup
  consisting of block upper triangular matrices is denoted by
 $P_{(m;n_1,\dots,n_r)}$, with unipotent radical $U_{(m;n_1,\dots,n_r)}$.\\

\item or the matrices $$z_1.diag(a_{r-1}^{-1},\dots,a_1^{-1},a_1,\dots,a_r),$$ with $a_i$ in $GL(n_i,F)$, $2n_1+\dots + 2n_r=2(n-1)$, and $z_1$ in $G_1$.
\end{itemize}

 
For $n\geq 2$, we denote by $U_n$ the group $U_{(n-1;1)}$, of matrices of the form $\begin{bmatrix} 1 & -^\tau\!V_2  & ^\tau\!V_1  & x \\
                                                                                       &   I_{n-2}    &             & V_1\\
                                                                                       &              &   I_{n-2}   & V_2\\ 
                                                                                       &              &             & 1 \end{bmatrix}$.\\

For $n\geq3$, it is an extension of $F^{2(n-2)}$ by $F$, which is the Heisenberg group corresponding to the alternating bilinear form on $F^{2(n-2)}$,
 given by $(W_1,W_2)\times (V_1,V_2)\mapsto  -^\tau\!W_2 .V_1 +  ^\tau\!W_1. V_2$.\\
 It is a two steps nilpotent subgroup, with center equal to its derived subgroup,
 given by matrices with $V_1=V_2=0$, and the maximal abelian quotient $U_{n}^{ab}$ of $U_{n}$ is $F^{2(n-2)}$.\\
The group $U_{2}$ is the unipotent radical of the standard Borel of $G_2=GSp(2,F)=GL(2,F)$, and is isomorphic to $(F,+)$.\\

For $n\geq 2$, we denote by $P_{n}$ the ``mirabolic'' subgroup $G_{n-1}\ltimes U_{n}$.

\item \textbf{Case D:}\\
We denote by $G_0$ the trivial group. The group $G_1$ is $F^*$.\\

For $n\geq 2$, the group $G_{n}=GSO(2(n-1),F)$ is the group of matrices $g$ in $GL(2n,F)$ satisfying that $^t\!g J g$ belongs to $F^*J$, where 
$J$ is the antidiagonal matrix of $GL(2n,F)$ with ones an the second diagonal. It is the semi-direct product of 
$SO(2(n-1),F)$ with the group $I(F^*)$, of the matrices $I(t)=diag(tI_n,I_n)$ for $t$ in $F^*$.\\

The maximal torus of $G_{n}$ is equal to the product of the torus of matrices of the form $diag(x_{n-1}^{-1},\dots,x_1^{-1},x_1,\dots,x_{n-1})$
 with $I(F^*)$ and is isomorphic to $(F^*)^{n}$.\\
If $n$ is $2$, then $G_2$ is the diagonal torus of $GL(2,F)$.\\
For $n\geq 3$, the simple roots of this group can be chosen to be the characters $$\alpha_{i+1}(diag(tx_{n-1}^{-1},\dots,tx_1^{-1},x_1,\dots,x_{n-1}))=x_i x_{i+1}^{-1}$$ 
for $i$ between $1$ and $n-2$, and 
$\alpha_1(diag(tx_{n}^{-1},\dots,tx_1^{-1},x_1,\dots,x_{n}))= tx_1^{-1}x_2^{-1} $.\\
For $i\geq1$, the root subgroup $U_{\alpha_{i+1}}$ is given by matrices of the form $$diag(1,\dots,1,u,1,\dots,1,u^{-1},1,\dots,1)$$
 for matrices $u$ in the unipotent radical
 of the Borel of $GL(2,F)$, whereas $U_{\alpha_1}$ is given by matrices $diag(1,\dots,1,u,1,\dots,1)$ for matrices $u$ of the form 
$\begin{bmatrix} 1 &  & y  & \\  & 1 &   & -y \\  &  & 1 &  \\  &  &  & 1 \end{bmatrix}$, with $y$ in $F$.\\ 

For $n > k \geq 2$, the group $G_{k}$ embeds naturally in $G_n$, and is given by matrices of the form 
$diag(\mu(g)I_{n-k},g,I_{n-k})$, where $\mu(g)$ 
is the multiplier of the element $g$ of $G_k$. For $k\geq3$, its center $Z_k$ is given by matrices of the form 
$z_k(t)=diag( t^2 I_{n-k},tI_{2(k-1)},I_{n-k})$ for $t$ in $F^*$.\\
\textbf{We denote by $\mathbf{Z_2}$ the subgroup of the torus $\mathbf{G_2}$, given by matrices of the form 
$\mathbf{z_2(t)=diag( t I_{n-2},1,t,I_{n-2})}$ for $\mathbf{t}$ in $\mathbf{F^*}$}.\\
The group $G_1$ which is equal to its center $Z_1$, embeds as $I(F^*)$.\\

The standard Levi subgroups of $G_{n}$ are the following:\\
\begin{itemize}
 \item The groups given by matrices of the form 
$$diag(\mu(g)^\tau\!a_{r}^{-1},\dots,\mu(g)^\tau\!a_1^{-1},g,a_1,\dots,a_r),$$
where $a_i$ belongs to $GL(n_i,F)$, $g$ belongs to $G_m$ with $m\geq 3$,
 with $2(m-1)+2n_1+\dots + 2n_r=2(n-1)$. We denote the preceding group by $M_{(m;n_1,\dots,n_r)}$, and the corresponding standard parabolic subgroup is denoted by
 $P_{(m;n_1,\dots,n_r)}$, with unipotent radical $U_{(m;n_1,\dots,n_r)}$.
 
\item The groups given by matrices of the form 
$$g_2.diag(^\tau\! a_{r}^{-1},\dots,^\tau\!a_1^{-1},1,1,a_1,\dots,a_r),$$ with $a_i$ in $GL(n_i,F)$, $2n_1+\dots + 2n_r=2(n-2)$, and $g_2$ in $G_2$.

\item The groups given by matrices of the form 
$$z_1.diag(^\tau\!a_{r}^{-1},\dots,^\tau\!a_1^{-1},a_1,\dots,a_r),$$ with $a_i$ in $GL(n_i,F)$, $2n_1+\dots + 2n_r=2(n-1)$, and $z_1$ in $G_1$.

 \item The groups given by matrices of the form $$z_2.diag(^\tau\!a_{r}^{-1},\dots,^\tau\!a_1^{-1},g,a_1,\dots,a_r),$$ where $a_i$ belongs to $GL(n_i,F)$,
and $g$ in $GL(2(m-1),F)$ is of the form $$\begin{bmatrix} A &  &V &  \\  & t'& &L' \\ L &  &t &  \\  & V'& &A'  \end{bmatrix},$$ 
with $$\begin{bmatrix} A & V \\ L & t  \end{bmatrix}\in GL(m-1,F),$$ $A$ a $(m-2\times m-2)$-matrix,
$$\begin{bmatrix} ^\tau\!A' & ^\tau\!L' \\^\tau\! V' & t'  \end{bmatrix}=\begin{bmatrix} A & V \\ L & t  \end{bmatrix}^{-1},$$
 with $2m+2n_1+\dots + 2n_r=2n$ and $z_2$ in $Z_2$.
\end{itemize}

For $n\geq 3$ we denote by $U_{n}$ the subgroup $U_{(n-1;1)}$ of $G_{n}$, of matrices of the form $$\begin{bmatrix} 1 & -^\tau\!V  &  -^\tau\!V V/2 \\
                                                                                       &   I_{2(n-2)}    & V\\
                                                                                       &              & 1 \end{bmatrix}.$$

It is isomorphic to $F^{2n-2}$.\\

For $n\geq 3$ denote by $P_{n}$ the ``mirabolic'' subgroup $G_{n-1}\ltimes U_{n}$.\\ 
We denote by $U_2$ the group $U_{\alpha_1}$, and by $P_2$ the group $G_1\ltimes U_2$.

\end{itemize}

\begin{LM}\label{torusdec}
We denote by $Z_i$ the center of $G_i$, except in case $D$ when $n=2$, where we denote by $Z_2$ the subgroup $diag( 1, t)$ with $t$ in $F^*$ of $G_2$.
 In all cases, one checks that the maximal torus $A_n$ of $G_n$ is the direct product $Z_1.Z_2\dots Z_{n-1}.Z_n$, and each $Z_i$ is isomorphic to $F^*$.
 Moreover, the i-th root has the property that $\alpha_i(z_1\dots z_n)=z_i$, in other words these coordinates parametrize the torus 
$A_n$ such that simple roots become canonical projections.
\end{LM}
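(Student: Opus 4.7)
The plan is a direct case-by-case verification, relying on the explicit parametrizations of the torus $A_n$, of the subgroups $Z_i$, and of the simple roots $\alpha_i$ recorded above.

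First, in each of the four cases, $Z_i$ is presented as a one-parameter family $z_i(t)$ with $t\in F^*$, so $Z_i\simeq F^*$ is immediate from the definition. All the $Z_i$ lie in the commutative torus $A_n$, so the product $Z_1Z_2\cdots Z_n$ is the image of the multiplication map $m:(F^*)^n\to A_n$, $(t_1,\dots,t_n)\mapsto z_1(t_1)\cdots z_n(t_n)$. Since $A_n\simeq(F^*)^n$ has dimension $n$, to establish the direct product decomposition it is enough to show that $m$ is injective.

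Next, I would compute block by block the diagonal of $z_1(t_1)\cdots z_n(t_n)$ from the explicit formulas given in this section. In case A this immediately yields
\[ z_1(t_1)\cdots z_n(t_n)=diag(t_1t_2\cdots t_n,\ t_2\cdots t_n,\ \dots,\ t_{n-1}t_n,\ t_n); \]
in cases B, C and D the diagonal entries are similar monomials in the $t_j$ (for example, $x_i=t_{i+1}\cdots t_n$ on the ``lower half'' in cases C and D, and $x_i=1/(t_1\cdots t_i)$ in case B). Substituting into the formulas for the simple roots then produces, after telescopic cancellation,
\[ \alpha_i\bigl(z_1(t_1)\cdots z_n(t_n)\bigr)=t_i\qquad\text{for }1\leq i\leq n-1. \]

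To conclude, I would recover the remaining parameter $t_n$ from the multiplier character of $G_n$ (or, in case A, from the determinant), which takes the form of an explicit monomial in $t_1,\dots,t_n$ having positive degree in $t_n$. Combined with the simple-root computation, this shows that $m$ is injective, hence bijective by dimension, so $A_n=Z_1Z_2\cdots Z_n$ as an internal direct product, and the simple roots act as the canonical projections onto the first $n-1$ factors.

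The main obstacle is not conceptual but purely the bookkeeping across the four cases. Case D is the most delicate, because of the exceptional definition of $Z_2$ when $n=2$ emphasized in the paragraph just above the lemma, and the non-uniform shape of $\alpha_1$, which have to be treated by hand; however, no new ideas beyond matrix multiplication and substitution appear.
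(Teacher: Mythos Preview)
Your proposal is correct and is precisely the kind of direct case-by-case verification the paper has in mind; the paper gives no proof of this lemma beyond the phrase ``one checks that'', so your explicit computation of $z_1(t_1)\cdots z_n(t_n)$, the telescopic evaluation of the simple roots, and the recovery of $t_n$ via the determinant or multiplier simply fill in what the author left to the reader.
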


The unipotent radical $N_{n+1}$ of the standard Borel subgroup of $G_{n+1}$ is equal to $U_2\dots U_{n+1}$. Let $\theta$ be a nondegenerate character of $N_{n+1}$ 
(i.e. that restricts non trivially to any of the simple root subgroups). We denote by $\theta_{i+1}$ the character $\theta_{|U_{i+1}}$, except in the case $D$,
for $i=2$. In this case $U_3=U_{\alpha_1}\times U_{\alpha_2}$, and we denote by $\theta_3$ the character $\theta_3(u_{\alpha_1}u_{\alpha_2})=\theta(u_{\alpha_2})$.\\
 Because $\theta$ is trivial 
on $U_{der}$, and according to the description of $U_{der}$ in Theorem 4.1 of \cite{BH}, the character $\theta_{i+1}$ must be trivial on every root subgroup $U_{\alpha}$ 
contained in $U_{i+1}$ such that $\alpha$ is not simple, moreover for case D, $n=2$, the character $\theta_3$ is trivial on $U_{\alpha_1}$.\\
Conversely, if a non trivial character $\theta_{i+1}$ of $U_{i+1}$ is trivial on every $U_{\alpha}\subset U_{i+1}$ which is not simple, and if, in case D, $n=2$, 
we impose in addition that $\theta_3$ is trivial on $U_{\alpha_1}$,
then one checks that the normalizer of $\theta_{i+1}$ in 
the mirabolic subgroup $P_{i+1}$ is $P_{i }U_{i+1}$. 
As the group $U_2\dots U_{i}$ is a subgroup of $P_{i}$, a family of non trivial characters 
$\theta_{i+1}$ of $U_{i+1}$, trivial on every $U_{\alpha}\subset U_{i+1}$ except $U_{\alpha_i}$, defines a nondegenerate character of $N_{n+1}= U_2\dots U_{n+1}$ by 
$\theta(u_2\dots u_{n+1})=\prod_{i=1}^n \theta_{i+1}(u_{i+1})$.\\

Now we fix such a nondegenerate character $\theta$, and write $\theta^{k}$ for the character $\theta_2\dots\theta_{k}$ of $N_{k}$.\\

\section{Derivatives and Whittaker functions}\label{der}

If $G$ is an $l$-group, we denote by $Alg(G)$ the category of smooth complex $G$-modules. If $(\pi,V)$ belongs to $Alg(G)$, $H$ is a closed subgroup of $G$,
 and $\chi$ is a character of $H$, we denote by $V(H,\chi)$ the subspace of $V$ generated by vectors of the form $\pi(h)v-\chi(h)v$ for $h$ in $H$ and $v$ in $V$. 
This space is actually stable under the action of the subgroup $N_G(\chi)$ of the normalizer $N_G(H)$ of $H$ in $G$, which fixes $\chi$.\\
We denote by $\delta_H$ the positive character of $N_G(H)$ such that if $\mu$ is a right Haar measure on $G$, and $\lambda$ is the left translation 
of smooth functions with compact support on $G$, then $\mu \circ \lambda(n^{-1})= \delta_H(n)\mu $ for $n$ in $N$.\\ 
 This gives the spaces $V(H,\chi)$ and $V_{H,\chi}=V/V(H,\chi)$ (that we simply denote by $V_H$ when 
$\chi$ is trivial) a structure of smooth $N_G(\chi)$-modules.\\  
 
Notations being as in the first section, and for $k$ be an integer between $2$ and $n$ we define the following functors:\\

\begin{itemize}

\item First we recall the definition of the Jacquet functors:\\
Let $P$ be a parabolic subgroup of $G_n$, with Levi subgroup $M$, and unipotent radical $U$.\\ 
We denote by $J_P$ the functor from 
$Alg(G_n)$ to $Alg(M)$ such that, if $(\pi,V)$ is a smooth $G_n$-module, 
we have $J_P(V) =V_{U}$, and $M$ acts on $J_P(V)$ by
 $J_P \pi (m)(v+V(U,1))= \delta_{U} (m)^{-1/2}\pi (m)v+V(U,1)$.\\

\item With the same notations, we denote by $i_P^G$ the functor from $Alg(M)$ to $Alg(G_n)$ such that, 
if $\rho$ is a smooth $M$-module, and $\bar{\rho}$ is the corresponding $P$-module obtained by inflation of $\rho$ to $P$, then $i_P^G(\rho)$ is the 
$G_n$-module $ind_P^{G_n}(\bar{\rho})$ where $ind$ is the usual compact induction.

 \item The functor $\Phi_{\theta_k}^{-}$ (denoted $r_{U_k,\theta_k}$ in section 1 of \cite{BZ2}) from $Alg(P_k)$ to $Alg(P_{k-1})$ such that, if $(\pi,V)$ is a smooth $P_k$-module, 
$\Phi_{\theta_k}^{-} V =V_{U_k,\theta_k}$, and $P_{k-1}$ acts on $\Phi_{\theta_k}^{-}(V)$ by
 $\Phi_{\theta_k}^{-} \pi (p)(v+V(U_k,\theta_k)= \delta_{U_k} (p)^{-1/2}\pi (p)(v+V(U_k,\theta_k)$.

\item The functor $\Phi_{\theta_k}^{+}$ (denoted $i_{U_k,\theta_k}$ in section 1 of \cite{BZ2}) from $Alg(P_{k-1})$ to $Alg(P_{k})$ such that, for $\pi$ in $Alg(P_{k-1})$, one has
$\Phi_{\theta_k}^{+} \pi = ind_{P_{k-1}U_k}^{P_k}(\delta_{U_k}^{1/2}\pi \otimes \theta_k)$, where $ind$ is the usual compact induction.

\item The functor $\hat{\Phi}_{\theta_k}^{+}$ ($I_{U_k,\theta_k}$ in section 1 of \cite{BZ2}) from $Alg(P_{k-1})$ to $Alg(P_{k})$ such that, for $\pi$ in $Alg(P_{k-1})$, one has
$\Phi_{\theta_k}^{+} \pi = Ind_{P_{k-1}U_k}^{P_k}(\delta_{U_k}^{1/2}\pi \otimes \theta_k)$, where $Ind$ is the usual induction.

\item The functor $\Psi^{-}$ is the Jacquet functor $J_{U_k}$, (denoted $r_{U_k,1}$ in section 1 of \cite{BZ2}) from $Alg(P_k)$ to $Alg(G_{k-1})$, such that if $(\pi,V)$ is a smooth $P_k$-module, 
$\Psi^{-} V =V_{U_k,1}$, and $G_{k-1}$ acts on $\Psi^{-}(V)$ by
 $\Psi^{-} \pi (g)(v)+V(U_k,\theta_k)= \delta_{U_k} (g)^{-1/2}\pi (p)(v+V(U_k,1))$.

\item The functor $\Psi^{+}$ (denoted $i_{U_k,1}$ in section 1 of \cite{BZ2}) from $Alg(G_{k-1})$ to $Alg(P_{k})$, such that for $\pi$ in $Alg(G_{k-1})$, one has
$\Psi^{+} \pi = ind_{G_{k-1}U_k}^{P_k}(\delta_{U_k}^{1/2}\pi \otimes 1)=\delta_{U_k}^{1/2}\pi \otimes 1 $.

\end{itemize}

As we already fixed the character $\theta$ of $N_n$, we will most of the time forget the dependence in $\theta_k$ of 
$\Phi_{\theta_k}^{-}$ and $\Phi_{\theta_k}^{+}$, and we will write these functors $\Phi^{-}$ and $\Phi^{+}$. 
These functors have the following properties which follow (except for c) an d) which are trivial) from Proposition 1.9 of \cite{BZ2}:

\begin{prop}
a) The functors $\Phi^{-}$, $\Phi^{+}$, $\Psi^{-}$, and $\Psi^{+}$ are exact.\\
b) $\Psi^{-}$ is left adjoint to $\Psi^{+}$.\\
b') $\Phi^{-}$ is left adjoint to $\hat{\Phi}^{+}$.\\
c) $\Phi^{-}\Psi^{+}=0$\\
d) $\Psi^{-}\Psi^{+}=Id$.
\end{prop}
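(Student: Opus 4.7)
The plan is to reduce parts (a), (b), and (b') to general facts established in Proposition 1.9 of \cite{BZ2}, and to verify (c) and (d) by direct inspection. The functors $\Phi^{-}$ and $\Psi^{-}$ are instances of the Bernstein--Zelevinsky twisted coinvariants $r_{U_k,\theta_k}$ and $r_{U_k,1}$, while $\Phi^{+}$ and $\Psi^{+}$ are the associated compactly induced functors $i_{U_k,\theta_k}$ and $i_{U_k,1}$, and $\hat{\Phi}^{+}$ is the full induction $I_{U_k,\theta_k}$. Thus (a), (b), and (b') are literal transcriptions of the exactness and Frobenius-type adjunctions already recorded in \cite{BZ2}. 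The only nontrivial input is exactness of the $(U_k,\chi)$-coinvariants; this uses the fact that $U_k$ is a union of compact open subgroups, so that membership in $V(U_k,\chi)$ is detected by a single averaging integral over a sufficiently large compact subgroup. With these identifications, the citation is immediate.

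For (c), the definition of $\Psi^{+}$ gives $\Psi^{+}\pi = \delta_{U_k}^{1/2}\pi\otimes 1$ as a $P_k = G_{k-1}\ltimes U_k$-module, so $U_k$ acts trivially on the underlying space. Since $\theta_k$ is nontrivial, I pick $u_0\in U_k$ with $\theta_k(u_0)\neq 1$; then for every vector $v$,
\[
v = (1-\theta_k(u_0))^{-1}\bigl(v - \theta_k(u_0)\pi(u_0)v\bigr)\in V(U_k,\theta_k),
\]
so $V(U_k,\theta_k)$ is the whole space and $\Phi^{-}\Psi^{+}\pi = 0$.

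For (d), $U_k$ again acts trivially on $\Psi^{+}\pi$, so $V(U_k,1)$ is generated by $v - \pi(u)v = 0$ and thus vanishes; the functor $\Psi^{-}$ therefore returns the underlying space unchanged, and the $\delta_{U_k}^{-1/2}$ twist built into the $G_{k-1}$-action of $\Psi^{-}$ exactly cancels the $\delta_{U_k}^{1/2}$ twist built into $\Psi^{+}$. Hence $\Psi^{-}\Psi^{+}=\mathrm{Id}$.

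No step should present a genuine obstacle: parts (a)--(b') amount to a dictionary translation of \cite{BZ2} into the notation of Section \ref{der}, and (c)--(d) are one-line verifications exploiting that $U_k$ acts trivially on the image of $\Psi^{+}$. If anything requires care, it is only to make sure that the normalisations $\delta_{U_k}^{\pm 1/2}$ cancel correctly in (d), which they manifestly do.
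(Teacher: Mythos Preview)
Your proposal is correct and matches the paper's approach exactly: the paper states that (a), (b), (b') follow from Proposition 1.9 of \cite{BZ2}, while (c) and (d) are ``trivial'', and you have simply spelled out the one-line verifications of the latter two. There is nothing to add.
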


Now we want to know how these functors restrict to smooth $P_k$-modules which are submodules of the space $C^{\infty}(N_k\backslash P_k,\theta^k)=Ind_{N_k}^{P_k}(\theta^k)$ 
of functions on $P_k$,
 fixed by some open subgroup of $P_k$ under right translation, and which transform by $\theta^k$ under left translation by elements of $N_k$.\\
The next proposition shows the stability of this type of modules under $\Phi^{-}$ and $\Phi^{+}$.

\begin{prop}\label{stab}
For any submodule $\tau$ of $C^{\infty}(N_k\backslash P_k,\theta^k)$, the $P_{k-1}$-module $\Phi^{-}\tau$ is a submodule of 
$C^{\infty}(N_{k-1}\backslash P_{k-1},\theta^{k-1})$,
 with model given by restriction of functions $\delta_{U_k}^{-1/2}W$ in $\tau$ to $P_{k-1}$, and such that we have 
$\Psi^{-}\tau(p)W= \rho (p)W$ for $p$ in $P_{k-1}$, where $\rho$ is the action by right translation.\\ 
Conversely, the $P_{k+1}$-module $\Phi^{+}\tau$ can be identified with a submodule of $C^{\infty}(N_{k+1}\backslash P_{k+1},\theta^{k+1})$, with the natural action
 of $P_{k+1}$ by right translation.
\end{prop}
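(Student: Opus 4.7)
My plan for $\Phi^{-}\tau$ is to define the map $R : \tau \to C^{\infty}(N_{k-1}\backslash P_{k-1}, \theta^{k-1})$ by $R(W)(p) = \delta_{U_k}^{-1/2}(p)\, W(p)$ for $p \in P_{k-1}$, and to verify four properties. First, that the image lies in the target space: writing $N_k = N_{k-1} U_k$ and $\theta^k = \theta^{k-1}\cdot\theta_k$, and using that any positive character like $\delta_{U_k}$ is trivial on the unipotent subgroup $N_{k-1}$, one gets $R(W)(np) = \theta^{k-1}(n)R(W)(p)$. Second, the $P_{k-1}$-equivariance of $R$ (with the normalized action $\delta_{U_k}^{-1/2}\pi$ on $\tau$ and right translation on the target) is immediate. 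Third, that $R$ kills $\tau(U_k,\theta_k)$: for $u \in U_k$ and $p \in P_{k-1}$, write $pu = (pup^{-1})p$ with $pup^{-1} \in U_k$ (as $P_{k-1}$ normalizes $U_k$), then invoke the normalization statement proved just before Lemma \ref{torusdec} that $P_{k-1}$ fixes $\theta_k$ to get $W(pu) = \theta_k(pup^{-1})W(p) = \theta_k(u)W(p)$, so $R$ vanishes on every generator $\pi(u)W - \theta_k(u)W$ and factors through a map $\overline{R} : \Phi^{-}\tau \to C^{\infty}(N_{k-1}\backslash P_{k-1}, \theta^{k-1})$.

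The main obstacle is the injectivity of $\overline{R}$. My approach is to reduce to the universal case $\tau = C^{\infty}(N_k\backslash P_k, \theta^k)$ using exactness of $\Phi^{-}$, and there identify $\overline{R}$ with the canonical map produced by the adjunction $\Phi^{-} \dashv \hat{\Phi}^{+}$: by transitivity of ordinary induction and the decomposition $N_k = N_{k-1} U_k$ (together with triviality of $\delta_{U_k}^{1/2}$ on the unipotent piece), one computes $\hat{\Phi}^{+}C^{\infty}(N_{k-1}\backslash P_{k-1}, \theta^{k-1}) = C^{\infty}(N_k \backslash P_k, \theta^k)$, and the identity endomorphism of this space corresponds under Frobenius reciprocity to a $P_{k-1}$-map $\Phi^{-}C^{\infty}(N_k\backslash P_k, \theta^k) \to C^{\infty}(N_{k-1}\backslash P_{k-1}, \theta^{k-1})$ which a Mackey-style coset calculation shows to be an isomorphism, matching $\overline{R}$ on explicit vectors. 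A self-contained alternative uses the Bernstein--Zelevinsky criterion that $W \in \tau(U_k,\theta_k)$ iff $\int_{U_0}\theta_k(u)^{-1}\pi(u)W\,du = 0$ for some compact open $U_0 \subset U_k$, combined with the decomposition $P_k = G_{k-1} \ltimes U_k$ and smoothness of $W$, to propagate the vanishing on $P_{k-1}$ out to all of $P_k$ after averaging over a sufficiently large $U_0$.

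For $\Phi^{+}\tau = \mathrm{ind}_{P_k U_{k+1}}^{P_{k+1}}(\delta_{U_{k+1}}^{1/2}\tau \otimes \theta_{k+1})$, whose elements are compactly-supported-mod-$P_kU_{k+1}$ functions $f : P_{k+1} \to \tau$ satisfying $f(puh) = \delta_{U_{k+1}}^{1/2}(p)\theta_{k+1}(u)\pi_\tau(p)f(h)$, I plan to set $\Psi(f)(g) = f(g)(1)$, viewing $f(g) \in \tau \subset C^{\infty}(N_k\backslash P_k,\theta^k)$ as a scalar function on $P_k$. The left $N_{k+1}$-equivariance of $\Psi(f)$ with character $\theta^{k+1}$ follows from writing $n = n_k u$ with $n_k \in N_k \subset P_k$ and $u \in U_{k+1}$, using triviality of $\delta_{U_{k+1}}$ on the unipotent element $n_k$, and then invoking the $\theta^k$-equivariance of the values $f(g) \in \tau$. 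Right $P_{k+1}$-equivariance is transparent. Injectivity follows from the identity $\Psi(f)(pg) = \delta_{U_{k+1}}^{1/2}(p)\, f(g)(p)$ for $p \in P_k$, which shows that $\Psi(f) \equiv 0$ forces $f(g)(p) = 0$ for all $g$ and $p$, and hence $f = 0$.
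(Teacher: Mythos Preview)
Your proof is correct and follows essentially the same route as the paper's. For $\Phi^-$, the paper reduces everything to the identification $\ker R = \tau(U_k,\theta_k)$ in the universal case and defers that to Cogdell--Piatetski-Shapiro; your option (b) via the Jacquet--Langlands/Bernstein--Zelevinsky criterion is precisely that argument, while your option (a) through the adjunction $\Phi^- \dashv \hat{\Phi}^+$ is a clean repackaging but still needs the same orbit analysis on $P_{k-1}\backslash G_{k-1}$ (your ``Mackey-style coset calculation'') to see that the counit $\Phi^-\hat{\Phi}^+\sigma \to \sigma$ is an isomorphism. For $\Phi^+$, the paper argues abstractly via $ind \subset Ind$ together with $\delta_{U_{k+1}}^{1/2}|_{N_k}=1$ and transitivity of induction; your evaluation map $\Psi(f)(g)=f(g)(1)$ is exactly the explicit realization of that transitivity isomorphism, and your identity $\Psi(f)(pg)=\delta_{U_{k+1}}^{1/2}(p)f(g)(p)$ gives a direct injectivity check that the paper leaves implicit.
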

\begin{proof}
The first property will hold if we show that $C^{\infty}(N_k\backslash P_k,\theta^k)(U_k,\theta_k)$ is  the kernel of the restriction map to
 $C^{\infty}(N_{k-1}\backslash P_{k-1},\theta^{k-1})$, this is a straightforward adaptation of the proof of Proposition 2.1 of 
\cite{CP}.\\
The second property is a consequence of the following equalities and inclusions:\\
$$\begin{array}{cc}
\Phi^{+}(C^{\infty}(N_k\backslash P_k,\theta^k))  &= ind_{P_{k}U_{k+1}}^{P_{k+1}}(\delta_{U_{k+1}}^{1/2}.Ind_{N_k}^{P_k}(\theta^k) \otimes \theta_{k+1})\\
   & \subset Ind_{P_{k}U_{k+1}}^{P_{k+1}}(\delta_{U_{k+1}}^{1/2}.Ind_{N_k}^{P_k}(\theta^k) \otimes \theta_{k+1}) \end{array}.$$

Then $$ \delta_{U_{k+1}}^{1/2}.Ind_{N_k}^{P_k}(\theta^k)\simeq Ind_{N_k}^{P_k}(\theta^k) $$ because the character $\delta_{U_{k+1}}^{1/2}$ of 
$P_{k}$ is trivial on $N_k$.

Finally $$Ind_{P_{k}U_{k+1}}^{P_{k+1}}(Ind_{N_k}^{P_k}(\theta^k)\otimes \theta_{k+1})\simeq Ind_{N_{k+1}}^{P_{k+1}}(\theta^{k+1})$$  

 \end{proof}

More can be said about smooth $P_k$-submodules of the space $C^{\infty}(N_k\backslash P_k,\theta^k)=Ind_{N_k}^{P_k}(\theta^k)$.
If $\tau$ is a $P_k$-submodule of $C^{\infty}(N_k\backslash P_k,\theta^k)$,
 then the derived subgroup of $U_k$ (which is trivial except in case D) acts trivially.\\ 
To see this, take $W$ in $C^{\infty}(N_k\backslash P_k,\theta^k)$, we claim that if $u$ belongs to the derived subgroup $U_k^{der}$ of $U_k$, then $\tau(u)W$ and $W$ are 
equal. So let $p$ belong
 to $P_{k}$; one has $\tau(u)W(p)=W(pu)= W(pup^{-1}p)=\theta^k(pup^{-1})W(p)$. But $P_{k}$ normalizes $U_k$ (so $\theta^k(pup^{-1})=\theta_k(pup^{-1})$), so that 
it normalizes its derived subgroup as well; as $\theta_k$ is trivial on this subgroup, this proves our claim.\\

For such modules $P_k$-modules, there is a nice interpretation of $V(U_k,1)$ in terms of the analytic behaviour of Whittaker functions.
 First, we make the following observation.\\

\begin{rem}\label{vanishlarge}
For $k\geq 3$, as a consequence of the Iwasawa decomposition, any element $g$ of $G_{k-1}$ can be written in the form $pzc$ with $p$ in $P_{k-1}$, 
$z$ in $Z_{k-1}$, 
and $k$ in $K=G_{k-1}(\o)$, and the absolute value of $z$ depends only on $g$, so we can write it $|z(g)|_F$. \\
 If a function $W$ is in the space of $C^{\infty}(N_k\backslash P_k,\theta^k)$, then for $g$ in $G_{k-1}$, we show that 
 $W(g)$ vanishes whenever $|z(g)|_F$ is large enough.\\ 
Indeed if we take the ``natural'' group isomorphism $u$ from $(F^m,+)$ to $U_k^{ab}$, for some positive integer $m$, and recalling that it is in 
fact $U_k^{ab}$ that acts on $V$, 
then $u(x)$ will fix $W$ for $x$ near zero in $F^m$.\\
 But then, for $g$ in $G_{k-1}$ of the form $pzk$, one has
 $W(g)=W(gu(x))=\theta_{k}(gu(x)g^{-1})W(g)$, which is equal $\theta_{k}(zku(x)(cz)^{-1})W(g)$ because $P_{k-1}$ normalizes $\theta_{k}$.
 This implies the equality $[\theta_{k}(zku(x)(kz)^{-1})-1]W(g)=[\theta_{k}(u(zkx))-1]W(g)=0$ for 
any $x$ in a neighbourhood of zero depending only on $W$. 
The assertion follows easily. 
\end{rem}

\begin{prop}\label{Ker}
Let $(\tau,V)$ be a $P_k$-submodule of $C^{\infty}(N_k\backslash P_k,\theta^k)$. Then the space $V(U_k,1)$ is the subspace of $V$, of functions $W$ 
such that there exists an integer $N_W$ with $W(g)=0$, for any $g$ satisfying $|z(g)|_F\leq q_F^{-N_W}$. 
\end{prop}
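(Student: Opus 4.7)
The plan is to apply the standard criterion of Bernstein--Zelevinsky (Proposition 1.6 of \cite{BZ2}): $W \in V(U_k,1)$ if and only if there is a compact open subgroup $U_0 \subset U_k$ such that $\int_{U_0} \tau(u) W \, du = 0$ in $V$. The proof then consists in transporting this condition into the asserted statement about vanishing of $W$ in terms of $|z(g)|_F$.

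First I compute the integral pointwise. Writing $g \in P_k$ as $g_0 u_0$ via the Levi decomposition $P_k = G_{k-1} \ltimes U_k$, and using that $G_{k-1}$ normalizes $U_k \subset N_k$, one has $W(g_0 v) = \theta_k(g_0 v g_0^{-1}) W(g_0) = \chi_{g_0}(v) W(g_0)$ for the character $\chi_{g_0}(v) := \theta_k(g_0 v g_0^{-1})$ of $U_k$. Applying this with $v = u_0 u$,
\[
\int_{U_0} W(gu)\, du \;=\; W(g) \int_{U_0} \chi_{g_0}(u)\, du;
\]
the last integral equals $\mathrm{vol}(U_0)$ if $\chi_{g_0}$ is trivial on the image $\overline{U_0}$ of $U_0$ in $U_k^{\mathrm{ab}}$ (through which $\chi_{g_0}$ factors, as $\theta_k$ is trivial on $U_k^{\mathrm{der}}$), and vanishes otherwise.

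Next, I analyze when $\chi_{g_0}|_{\overline{U_0}}\equiv 1$. Using the decomposition $g_0 = pzk'$ from Remark \ref{vanishlarge} with $z = z_{k-1}(t)$, and the fact that $P_{k-1}$ normalizes $\theta_k$, $\chi_{g_0}$ depends only on $(t,k')$, and via the identification $u:F^m \to U_k^{\mathrm{ab}}$ one has $\chi_{g_0}(u(x))=\psi(t^a\lambda(k'\cdot x))$, with $\psi$ the additive character defining $\theta_k$, $\lambda$ a nonzero linear form on $F^m$, and $a > 0$ the scaling exponent of $Z_{k-1}$ on $U_k^{\mathrm{ab}}$. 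Choosing $\overline{U_0}=(\p^{-M})^m$, the bounded linear action of the compact group $G_{k-1}(\o)$ on $F^m$ implies that $\lambda(k'\overline{U_0})$ is a fractional ideal of the form $\p^{-M+c(k')}$ with $|c(k')|$ bounded in $k'$. Hence, uniformly in $k'$, $\chi_{g_0}|_{\overline{U_0}}\equiv 1$ whenever $|z(g_0)|_F \leq q_F^{-N(M)}$ and $\chi_{g_0}|_{\overline{U_0}}\not\equiv 1$ whenever $|z(g_0)|_F > q_F^{-N'(M)}$, for thresholds $N(M), N'(M) \to +\infty$ as $M \to \infty$.

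The two implications now follow. Forward: for $W \in V(U_k,1)$ and the associated $U_0$, take $N_W = N(M)$; any $g$ with $|z(g)|_F \leq q_F^{-N_W}$ has $\chi_{g_0}|_{\overline{U_0}}\equiv 1$, so the computation forces $W(g)=0$. Converse: given $W$ vanishing for $|z(g)|_F \leq q_F^{-N_W}$, enlarge $M$ so that $N'(M)\geq N_W$; any $g$ with $W(g)\neq 0$ then satisfies $|z(g_0)|_F > q_F^{-N_W} \geq q_F^{-N'(M)}$, whence $\chi_{g_0}|_{\overline{U_0}}\not\equiv 1$ and the integral vanishes. This holds for every $g \in P_k$, so $\int_{U_0}\tau(u)W\,du = 0$ in $V$ and $W \in V(U_k,1)$. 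The main technical point is the uniformity in $k' \in G_{k-1}(\o)$ of the thresholds $N(M)$ and $N'(M)$, i.e., the bounded distortion of the compact linear action of $G_{k-1}(\o)$ on $U_k^{\mathrm{ab}}$ with respect to $\lambda$; this must be verified case by case (A, B, C, D), with some extra care in cases C and D, where one works modulo the nontrivial derived subgroup of the Heisenberg group $U_k$.
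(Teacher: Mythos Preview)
Your proof is correct and follows essentially the same route as the paper. Both arguments rest on the Jacquet--Langlands/Bernstein--Zelevinsky criterion for membership in $V(U_k,1)$, both evaluate $\int_{U_0}\tau(u)W\,du$ pointwise using the conjugation action of $G_{k-1}$ on $U_k$, and both reduce the question to whether the twisted character $v\mapsto\theta_k(zk'vk'^{-1}z^{-1})$ is trivial on a given compact open, controlled uniformly in $k'\in G_{k-1}(\o)$ by compactness. The one difference is in the forward implication: the paper argues directly on generators of $V(U_k,1)$, computing $[\pi(u)W'-W'](g)=[\theta_k(u(zkx))-1]W'(g)$, whereas you invoke the integral criterion again; both are standard and equally short. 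One small slip: only in case C is $U_k$ a genuine Heisenberg group with nontrivial derived subgroup; in case D the group $U_k$ is abelian (the paper itself contains the same typo in the paragraph preceding the proposition), so the ``extra care'' you mention is needed in case C alone.
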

\begin{proof}
 Suppose first that a function $W$ is in $V(U_k,1)$, so we can write it $\pi(u)W'-W'$ for some $u$ in $U_k^{ab}$ and some $W'$ in $V$.
 Then, writing $g$ as $pzk$, and $u$ as $u(x)$ for $x$ in $F^{m}$, we have $[\pi(u)W'-W'](g)= [\theta_k(u(zkx))-1]W'(g)$, 
which will be zero to $0$ when $|z|_F$ is close to zero.\\
Conversely, we use the characterization of Jacquet and Langlands asserting that the elements $W$ of $V(U_k,1)=V(U_k^{ab},1)$ are those such that $\int_{U}\tau(u)Wdu$ is zero as
 soon as the open compact subgroup $U$ of $U_k^{ab}$ contains some compact open subgroup $U_W$. So suppose $W$ is in $V$ and that it vanishes on elements
$g$ of $G_{n-1}(F)$ satisfying $|z(g)|_F\leq q_F^{-N_W}$.\\
 Let $U$ be any open compact subgroup of $U_k^{ab}$, 
that we identify with a subgroup of $F^{m}$. The integral $\int_{U}\tau(u)Wdu$ evaluated at $g=pzk$, is equal to 
$\int_{x\in U}\theta_k(zkx)W(g)dx$. Hence this integral is always zero for $|z|_F\leq q_F^{-N_W}$ because $W(g)$ is.\\
 We now recall that as $\theta_k$ is a non trivial character of $U_k^{ab}$, there exists a compact open ball $U_0$ of $U_k^{ab}\simeq F^n$ such that, 
the integral $\int_{x\in U}\theta_k(x)dx$ is zero whenever the compact open subgroup $U$ of $U_k^{ab}$ contains $U_0$. But then for $|z|_F\geq q_F^{-N_W}$, 
if $t_W$ is an element of $F^*$ of absolute value $q_F^{N_W}$, the integral $\int_{x\in U}\theta_k(zkx)W(g)dx$ is also zero as soon as $U$ contains $t_W U_0$. 
Hence $\int_{U}\tau(u)Wdu$ is zero when $U$ is a compact open subgroup of 
$U_k^{ab}$ containing $U_W=t_W U_0$, and $W$ belongs to $V(U_k,1)$. 

\end{proof}

For any smooth $P_n$-module $\tau$, and any integer $k\geq1$, we denote by $\tau_{(k)}$ the representation of $P_{n-k+1}$ equal to $\Phi^{k-1} \tau$, and by
$\tau^{(k)}$ the representation of $G_{n-k}$ equal to $\Psi^-\Phi^{k-1} \tau=\Psi^-\tau_{(k)}$.\\
We say that a smooth irreducible representation $\pi$ of $G_n$ is $\theta^n$-generic if it is isomorphic to a submodule of the induced representation 
$Ind_{N_n}^{G_n}(\pi)$. If it is the case, the submodule of $Ind_{N_n}^{G_n}(\pi)$ isomorphic to $\pi$ is unique, it is called the Whittaker
 model of $\pi$ and denoted by $W(\pi,\theta^n)$.\\
Now we let $(\pi,V)$ be a $\theta^n$-generic representation of $G_n$ (hence a smooth $P_n$-module as well), we denote by $(\pi',V')$ the representation of $P_n$ 
obtained on the space of restrictions of functions in $W(\pi,\theta)$ to $P_n$, it is a quotient of $\pi$ as a $P_n$-module, and restriction to $P_n$ is known to be 
an isomorphism in case A.\\ 

The following proposition follows from applying repeatedly Proposition \ref{stab}, and from Proposition \ref{Ker}.

\begin{prop}\label{Ker2}
Let $\tau$ be a smooth $P_n$-submodule of $C^{\infty}(N_n\backslash P_n,\theta^n)$, and $k\geq 0$ be an integer, then the $P_{k+1}$-module 
$\tau_{(n-k-1)}$ is a submodule of $C^{\infty}(N_{k+1}\backslash P_{k+1},\theta^{k+1})$,
 with model given by restriction of functions $[\delta_{U_{k+2}}\dots\delta_{U_n}]^{-1/2}W$ in $\tau$ to $P_{k+1}$.
  In this realisation, one has 
$\tau_{(n-k-1)}(p)W= \rho (p)W$ for $p$ in $P_{k+1}$, where $\rho$ is the action by right translation.\\ 

\end{prop}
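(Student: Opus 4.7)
The plan is to argue by induction on the number of applications of $\Phi^-$ used to form $\tau_{(n-k-1)}$, with Proposition~\ref{stab} supplying the single-step mechanism and the proposition above being essentially its iteration.

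For the base case, no $\Phi^-$ has been applied, the normalising factor is the empty product $1$, and by hypothesis $\tau$ is already a $P_n$-submodule of $C^{\infty}(N_n\backslash P_n,\theta^n)$ on which $P_n$ acts by right translation, so nothing needs to be done.

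For the inductive step I suppose that after $j-1$ iterations of $\Phi^-$ the module $\Phi^{j-1}\tau$ has been identified with a submodule of $C^{\infty}(N_{n-j+1}\backslash P_{n-j+1},\theta^{n-j+1})$, realised concretely as the restrictions to $P_{n-j+1}$ of the functions $[\delta_{U_{n-j+2}}\cdots\delta_{U_n}]^{-1/2}W$ for $W\in\tau$, with $P_{n-j+1}$ acting by right translation. This is exactly the kind of input that Proposition~\ref{stab} applies to, and that proposition identifies $\Phi^j\tau=\Phi^-\Phi^{j-1}\tau$ with a submodule of $C^{\infty}(N_{n-j}\backslash P_{n-j},\theta^{n-j})$ whose model is obtained by restricting $\delta_{U_{n-j+1}}^{-1/2}$ times the preceding model to $P_{n-j}$, again with right translation as the action. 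Combining the two normalisations yields the predicted factor $[\delta_{U_{n-j+1}}\cdots\delta_{U_n}]^{-1/2}$ at the new level, completing the inductive step.

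Iterating to the prescribed depth gives the proposition. The only substantive ingredient is Proposition~\ref{stab}; everything else is bookkeeping of the modulus characters $\delta_{U_i}$ and of the tower of groups $P_i$, so I do not anticipate any real obstacle beyond lining up the indices correctly between the induction parameter $j$ and the level $k+1$ at which the proposition is stated.
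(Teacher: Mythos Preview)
Your proposal is correct and matches the paper's own argument: the paper simply states that the proposition ``follows from applying repeatedly Proposition~\ref{stab}'', which is exactly the induction you carry out. The paper also cites Proposition~\ref{Ker}, but for the statement as formulated only the iterated use of Proposition~\ref{stab} is needed, and your bookkeeping of the accumulated modulus factor $[\delta_{U_{k+2}}\cdots\delta_{U_n}]^{-1/2}$ is precisely what that iteration produces.
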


The next proposition asserts amongst other things 
that for every $k\geq1$, 
the $G_{n-k}$-module $\pi^{(k)}$ has finite length. 

\begin{prop}\label{factorseries}
 If $(\pi,V)$ is a smooth representation of $G_n$ of finite length, then for $k$ between $1$ and $n-1$, the $G_k$-module $\pi^{(n-k)}$ 
(hence its quotient $\pi'^{(n-k)}$) has finite length.
 
\end{prop}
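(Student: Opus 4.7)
The plan is to use a Bernstein--Zelevinsky style filtration on $\pi|_{P_n}$ together with the exactness and adjointness properties of the derivative functors to deduce finite length of each $\pi^{(n-k)}$ from that of the restriction $\pi|_{P_n}$.

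First, I would verify that restricting $\pi$ from $G_n$ to $P_n$ preserves finite length. In Case A this is classical mirabolic theory; in the other cases it follows by a parallel argument based on the analogues of the mirabolic subgroup developed in Section~1, together with the standard theorem on finite length of Jacquet modules along the radical $U_n$ (which, modulo the Heisenberg subtlety in Case C where $U_n^{\mathrm{der}}$ acts trivially by the paragraph preceding Proposition~\ref{Ker}, is the unipotent radical of a genuine maximal parabolic of $G_n$).

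Next, I would invoke the analogue of the Bernstein--Zelevinsky filtration: every smooth $P_n$-module $\tau$ admits a natural filtration with $n$ terms whose successive quotients are isomorphic to $\Phi^{+\,k-1}\Psi^{+}\bigl(\tau^{(k)}\bigr)$ for $k = 1, \ldots, n$; this is Proposition~1.9 of~\cite{BZ2} in Case A and adapts to the other cases using Proposition~\ref{stab} and the reduction to abelianizations of the $U_j$. If $\tau = \pi|_{P_n}$ has finite length, then each subquotient $\Phi^{+\,k-1}\Psi^{+}\bigl(\pi^{(k)}\bigr)$ has finite length as well. Applying the exact functors $\Psi^{-}\Phi^{-\,k-1}$ and using the unit relations $\Phi^{-}\Phi^{+} = \mathrm{Id}$ and $\Psi^{-}\Psi^{+} = \mathrm{Id}$ (the latter is part~d) of the proposition preceding Proposition~\ref{stab}, the former is standard from \cite{BZ2}), one recovers $\pi^{(k)}$ as the exact image of a finite-length module, hence of finite length as a $G_{n-k}$-module. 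The finite length of the quotient $\pi'^{(n-k)}$ follows immediately.

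The hard part will be the case-by-case verification of the Bernstein--Zelevinsky filtration and of the restriction property in Cases B, C, and D. Case C is the most delicate, since the $U_j$ are two-step nilpotent Heisenberg groups: here the observation that $U_j^{\mathrm{der}}$ acts trivially on every submodule of $C^{\infty}(N_j\backslash P_j, \theta^j)$ reduces the structural analysis to the abelianizations $U_j^{\mathrm{ab}}$, after which the BZ machinery applies with only notational changes. A small extra verification is required in Case D at small index to handle the special definitions of $U_2$ and $Z_2$ given in Section~1.
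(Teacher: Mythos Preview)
Your argument has a genuine gap at its first step: you assert that $\pi|_{P_n}$ has finite length as a $P_n$-module, but the justification you offer does not establish this. The finite length of the Jacquet module along $U_n$ (Casselman) tells you that $\Psi^-\pi=\pi^{(1)}$ has finite length as a $G_{n-1}$-module; it says nothing about $\pi|_{P_n}$ itself. To run the Bernstein--Zelevinsky filtration inductively you would next need $\Phi^-\pi$ to have finite length as a $P_{n-1}$-module, but $\Phi^-\pi$ is a twisted Jacquet module along $(U_n,\theta_n)$ and is \emph{not} the restriction of any $G_{n-1}$-module, so neither Casselman's theorem nor an inductive hypothesis on smaller $G_m$ applies to it. In Case~A the statement ``$\pi|_{P_n}$ has finite length'' is indeed a theorem, but its standard proof in \cite{BZ2} goes \emph{through} the finite length of the derivatives, which in turn is obtained by exactly the Jacquet-module comparison the paper uses; invoking it here makes the argument circular. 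In Cases~B, C, D no analogue of that theorem is available in the literature independently of the present proposition.

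The paper bypasses this difficulty entirely: it realises $\pi^{(n-k)}$ directly as the $(N_{n-k,A},\theta)$-coinvariants of the ordinary Jacquet module $V_{U_{k,n-k}}$ along a genuine parabolic of $G_n$ with Levi $G_k\times GL(n-k,F)$. Casselman gives finite length of $V_{U_{k,n-k}}$, and the Whittaker-coinvariant functor on the $GL(n-k)$ factor is exact and kills non-generic irreducible $GL(n-k)$-factors, so each composition factor $\rho_1\otimes\rho_2$ contributes at most the irreducible $\rho_1$. This not only proves the proposition in one stroke for all four families, it also produces the surjection $I_k:V_{U_{k,n-k}}\twoheadrightarrow V^{(n-k)}$ that is essential for Propositions~\ref{weights} and~\ref{weights2} and ultimately for Theorem~\ref{l2'}. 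Your route, even if the gap were filled, would not supply this map.
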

\begin{proof} 
 For $k\geq1$, except in case D, $k=2$, we denote by $U_{k,n-k}$ the unique standard unipotent radical (denoted by $U_{(k;n-k)}$ in the previous section) containing $U_{\alpha_{k}}$ as only simple root subgroup.\\
In case $D$, for $k=2$, we be denote $U_{2,n-2}$ the unique standard unipotent radical containing $U_{\alpha_{1}}$ and $U_{\alpha_{2}}$ as only simple root subgroups.\\
In all cases, the corresponding Levi $M_{k,n-k}$ is the direct product of $G_k$ with $GL(n-k,F)$.\\
 Now the module $G_k$-module $\pi^{(n-k)}$ is a quotient of the Jacquet $G_k\times GL(n-k)$-module $$(\pi_{U_{k,n-k}},V/V(U_{k,n-k},1)),$$ as the kernel of 
the surjective map $\pi \twoheadrightarrow \pi^{(n-k)}$ contains $V(U_{k,n-k},1)$. 
More precisely, let $N_{n-k,A}$ be the unipotent radical of the standard Borel subgroup of $GL(n-k,F)$, the group $U_{k+1}\dots U_n$ is the 
semidirect product $N_{n-k,A}\ltimes U_{k,n-k}$, so that the space $V^{(n-k)}$ of $\pi^{(n-k)}$ is equal to 
the quotient $$V/V(N_{n-k,A}\ltimes U_{k,n-k},\theta^n_{|N_{n-k,A}}\otimes 1_{U_{k,n-k}})$$ where $V$ is the space of $\pi$.\\
 We denote by 
$I_{k}$ the surjection obtained by facorisation from $V_{U_{k,n-k}}$
onto $V^{(n-k)}$. From Lemma 2.32 of \cite{BZ}, the map $I_{k}$ identifies with the projection 
$$V_{U_{k,n-k}} \twoheadrightarrow (V_{U_{k,n-k}})_{N_{n-k,A},\theta^n_{|N_{n-k,A}}}=V_{U_{k,n-k}}/V_{U_{k,n-k}}(N_{n-k,A},\theta^n_{|N_{n-k,A}}).$$ 
The map $I_k$ is in fact a $G_k$-modules morphism, because of the equality of modulus characters 
$$(\delta_{U_{k,n-k}})_{|G_k}=(\delta_{U_{k+1}}\dots \delta_{U_{n}})_{|G_k}$$ which is itself a consequence of the decomposition 
$$U_{k,n-k}=\prod_{i=k+1}^n (U_{k,n-k}\cap U_i).$$
 
The group $N_{A,n-k}$ being a union of compact subgroups, 
the map $I_k$ preserves exact sequences.\\
As the Jacquet module functor preserves finite length, the $G_k\times GL(n-k,F)$-module $\pi_{U_{k,n-k}}$ has a finite composition series 
$0\subset (\pi_{U_{k,n-k}})_1\subset\dots\subset (\pi_{U_{k,n-k}})_{r_k}=\pi_{U_{k,n-k}}$. We put $\pi^{(n-k)}_i=I_k[(\pi_{U_{k,n-k}})_i]$.\\
 Hence $\pi^{(n-k)}_i/\pi^{(n-k)}_{i-1}$ is equal to $[(\pi_{U_{k,n-k}})_i/(\pi_{U_{k,n-k}})_{i-1}]_{N_{n-k,A},\theta_{|N_{n-k,A}}}$, but as a 
 $G_k\times GL(n-k,F)$-module, the quotient $(\pi_{U_{k,n-k}})_i/(\pi_{U_{k,n-k}})_{i-1}$ isomorphic to $\rho_1\otimes\rho_2$ for irreducible representations 
$\rho_1$ and $\rho_2$ of $G_k$ and $GL(n-k,F)$ respectively. Because the character $\theta^n$ restricts to $N_{n-k,A}$ as a nondegenerate character, the quotient 
$\pi^{(n-k)}_i/\pi^{(n-k)}_{i-1}$ is equal to 
$\rho_1\otimes (\rho_2)_{N_{n-k,A},\theta^n_{|N_{n-k,A}}}$, thus it is zero unless $\rho_2$ is generic, in which case it is equal to the irreducible 
representation $\rho_1$.\\
So we proved that $\pi^{(n-k)}$ has finite length as $G_k$-module, smaller than the length of the Jacquet module 
$\pi_{U_{k,n-k}}$ as a $G_k\times GL(n-k,F)$-module.  
 
\end{proof}

There is another property of the maps $I_k$ defined in the proof of the preceding proposition that is worth mentioning, which is 
that their restriction to generalised characteristic subspaces is nonzero. More formally, let $G$ be an $l$-group, 
and $T$ be a closed abelian subgroup of $G$. If $V$ is a smooth $G$-module, following \cite{C}, we define for each character $\chi$ of 
$T$, the $T$-submodule 
$$V_{\chi,\infty}=\{v\in V|\  \exists n\in \mathbb{N}, \forall t\in T, \ (\tau(t)-\chi(t)Id)^n(v)=0\}.$$ 
If $V$ is $T$-finite (i.e. every vector in $V$ generates a finite dimensional $T$-module), then it is the finite direct sum of its 
(nonzero by definition) generalised characteristic subspaces, 
and every such (nonzero) $V_{\chi,\infty}$ contains the nonzero generalised eigenspace 
$$V_{\chi}=\{v\in V|\  \exists n\in \mathbb{N}, \forall t\in T, \ (\tau(t)-\chi(t)Id)(v)=0\}.$$

First recall that smooth $(F^*)^r$-modules $E$, with a filtration 
${0}=E_0 \subset E_1 \subset \dots \subset E_{r-1} \subset E_{r}=E$ such that $(F^*)^r$ acts by a character on each quotient are 
$(F^*)^r$-finite.

\begin{LM}\label{fin-dim}
 Let $E$ be a smooth $(F^*)^r$-module $E$, with a filtration 
${0}=E_0 \subset E_1 \subset \dots \subset E_{r-1} \subset E_{r}=E$ such that $(F^*)^r$ acts by a character $c_{i+1}$ on each quotient $E_{i+1}/E_i$, then any vector of $E$ lies in a finite dimensional $(F^*)^r$-submodule.
\end{LM}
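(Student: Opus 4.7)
The plan is to proceed by induction on the length of the filtration (the torus rank being fixed throughout the induction). The base case of length $1$ is immediate: $E=E_1$ carries an action by the character $c_1$, so $\mathbb{C}v$ is a one-dimensional $(F^*)^r$-submodule containing any $v\in E$.

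For the inductive step at length $r$, I would reduce the problem from $E$ to $E_{r-1}$ via a twisted cocycle. Given $v\in E$, its image in $E/E_{r-1}$ is scaled by $c_r$, so the map
$$\phi : (F^*)^r \longrightarrow E_{r-1}, \qquad \phi(t) = tv-c_r(t)v$$
is well-defined. A direct computation gives the twisted cocycle identity $\phi(st)=s\,\phi(t)+c_r(t)\,\phi(s)$, and from $\phi(1)=0$ one extracts $\phi(g^{-1}) = -c_r(g^{-1})\,g^{-1}\phi(g)$, so $\phi(g^{-1})$ lies in the $(F^*)^r$-submodule generated by $\phi(g)$. Iterating these two relations, the image of $\phi$ on the subgroup generated by finitely many elements $g_1,\dots,g_N$ is contained in the $(F^*)^r$-submodule of $E_{r-1}$ generated by $\phi(g_1),\dots,\phi(g_N)$.

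To exploit this I would invoke smoothness of $v$ and of $c_r$: pick a compact open subgroup $K\subset(F^*)^r$ which fixes $v$ and on which $c_r$ is trivial. Then $\phi$ vanishes on $K$ and factors through the quotient $(F^*)^r/K$, which is a finitely generated abelian group, being an extension of $\mathbb{Z}^r$ by a finite group. Choose lifts $g_1,\dots,g_N\in(F^*)^r$ of a finite generating set of this quotient; the previous paragraph shows that the whole image of $\phi$ lies in the $(F^*)^r$-submodule $W_{r-1}\subset E_{r-1}$ generated by $\phi(g_1),\dots,\phi(g_N)$. Applying the inductive hypothesis to $E_{r-1}$ with its filtration of length $r-1$ and characters $c_1,\dots,c_{r-1}$, each $\phi(g_i)$ lies in some finite-dimensional $(F^*)^r$-submodule, and the sum of these finitely many finite-dimensional submodules contains $W_{r-1}$, which is therefore finite-dimensional. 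Finally, $\mathbb{C}v+W_{r-1}$ is stable under $(F^*)^r$ — since $tv=c_r(t)v+\phi(t)\in\mathbb{C}v+W_{r-1}$ — finite-dimensional, and contains $v$.

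The only delicate point is the claim that the twisted cocycle and inversion relations propagate finiteness along a generating set, i.e.\ that $\phi$ on an arbitrary element of $(F^*)^r/K$ can be expressed in terms of its values on $g_1,\dots,g_N$ modulo the $(F^*)^r$-action. This is a routine induction on word length using both the cocycle identity and the inversion formula; everything else is essentially formal once the reduction to the inductive hypothesis is set up.
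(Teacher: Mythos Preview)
Your proof is correct and follows essentially the same approach as the paper's: induction on the filtration using the relation $tv - c_{i+1}(t)v \in E_i$, together with smoothness to reduce to finitely many generators of $(F^*)^r$ modulo a compact open subgroup. The paper phrases this more concretely by first building a finite $(U_F)^r$-orbit of $v$ and then handling the uniformizer directions separately, whereas your twisted-cocycle formulation treats all generators of $(F^*)^r/K$ uniformly; the underlying argument is the same.
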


\begin{proof}
One proves this by induction on the smallest $i$ such that $E_i$ contains $v$. If this $i$ is $1$, the group $(F^*)^r$ only multiplies $v$ by a scalar, and we are done.\\
Suppose that the result is known for $E_i$, and take $v$ in $E_{i+1}$ but not in $E_i$. Then for every $t$ in $(F)^*$, the vector $\tau(t)v-c_{i+1}(t)v$ belongs to $E_i$. 
By smoothness, the set $\{ \tau(u)v \ | t\in (U_F)^r \}$ is actually equal to $\{ \tau(u)v \ | u\in P \}$ for $P$ a finite set of $(U_F)^r$. The vector space generated by this set
is stabilized by $(U_F)^r$, and has a finite basis $v_1,\dots,v_m$. Now the vectors $$\tau(1,\dots,1,\w,1,\dots,1)v_l-c_{i+1}(1,\dots,1,\w,1,\dots,1)v_l$$
 belong to $E_i$, hence by induction hypothesis, to a finite dimensional 
$(F^*)^r$-submodule $V_l$ of $E_i$.
Finally the finite dimensional space $Vect(v_1,\dots,v_m)+V_1+\dots+V_m$ is stable under $(U_F)^r$ and the elements $(1,\dots,1,\w,1,\dots,1)$,
 hence $(F^*)^r$, and contains $v$.
\end{proof}

This in particular applies to the $Z_kZ_n$-module $V_{U_{k,n-k}}$ and the $Z_k$-module $V$ described in the proof of Proposition \ref{factorseries}, as both are respectively 
$G_k\times GL(n-k,F)$ and $G_k$-modules of finite length.\\ 

 Now we can prove the following property of the maps $I_k$:

\begin{prop}\label{weights}
Let $(\pi,V)$ be a $\theta^n$-generic representation of $G_n$, and for $k\geq1$, let $U_{k,n-k}$ and $M_{k,n-k}\simeq G_k\times GL(n-k,F)$ the subgroups of $G_n$ defined in the proof of 
Proposition \ref{factorseries}. 
Let $\chi$ be a character of the central subgroup $Z_kZ_n$ of $M_{k,n-k}$, and denote by the same letter its restriction to the central 
subgroup $Z_k$ of $G_k$. If the 
generalised characteristic subspace $(V_{U_{k,n-k}})_{\chi,\infty}$ is nonzero, then the map $I_k$ restricts non trivially to $(V_{U_{k,n-k}})_{\chi}$. 
In particular the space $V^{(n-k)}_{\chi}$ is nonzero.  
\end{prop}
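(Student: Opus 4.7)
The plan is to combine the decomposition of $V_{U_{k,n-k}}$ into $Z_kZ_n$-generalised characteristic subspaces (from Lemma \ref{fin-dim}) with the composition-series analysis in the proof of Proposition \ref{factorseries}, and then exploit the $\theta^n$-genericity of $\pi$ on $G_n$.

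First I would set the stage. The finite filtration of $V_{U_{k,n-k}}$ as an $M_{k,n-k}$-module exhibited in the proof of Proposition \ref{factorseries} has $Z_kZ_n$ acting by a character on each successive quotient, so Lemma \ref{fin-dim} gives that $V_{U_{k,n-k}}$ is $Z_kZ_n$-finite. Hence it splits as the direct sum of its generalised characteristic subspaces, each of which is an $M_{k,n-k}$-submodule because $Z_kZ_n$ is central in $M_{k,n-k}$. The assumption $(V_{U_{k,n-k}})_{\chi,\infty}\neq 0$, combined with the remark preceding Lemma \ref{fin-dim}, yields $(V_{U_{k,n-k}})_\chi\neq 0$. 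Moreover, $I_k$ is a $G_k$-morphism, hence $Z_k$-equivariant, and $Z_n$ acts on $V$ by the central character of $\pi$, so $I_k$ is $Z_kZ_n$-equivariant, respects the decomposition, and satisfies $I_k((V_{U_{k,n-k}})_\chi)\subseteq (V^{(n-k)})_\chi$.

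Second, I would invoke the composition-series analysis. By the proof of Proposition \ref{factorseries}, the irreducible subquotients of the $M_{k,n-k}$-module $(V_{U_{k,n-k}})_{\chi,\infty}$ have the form $\rho_1^{(i)}\otimes\rho_2^{(i)}$; by Schur's lemma, each such simple has $Z_kZ_n$ acting exactly by $\chi$ (not just up to nilpotents), and $I_k$ sends $\rho_1^{(i)}\otimes\rho_2^{(i)}$ to the irreducible $G_k$-module $\rho_1^{(i)}$ when $\rho_2^{(i)}$ is $\theta^n_{|N_{n-k,A}}$-generic and to zero otherwise. Since the $M_{k,n-k}$-socle of $(V_{U_{k,n-k}})_{\chi,\infty}$ is a direct sum of simples on which $Z_kZ_n$ acts exactly by $\chi$, it is contained in $(V_{U_{k,n-k}})_\chi$, and the proof reduces to showing that at least one of these socle summands has its $GL(n-k,F)$-part generic: its image under $I_k$ would then be the nonzero irreducible $G_k$-module $\rho_1^{(i)}\subseteq (V^{(n-k)})_\chi$.

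The main obstacle is precisely this last point. It is where the $\theta^n$-genericity of $\pi$ on $G_n$ must be used substantively, beyond its role in defining the Whittaker model: one has to forbid the pathological case in which every socle component of $(V_{U_{k,n-k}})_{\chi,\infty}$ has non-generic $GL(n-k,F)$-part. To do this, I would use that the Whittaker functional of $\pi$ factors through the $(U_{k+1}\cdots U_n,\theta^n)$-coinvariants, hence through $V^{(n-k)}$; tracking this functional against the direct-sum decomposition of Lemma \ref{fin-dim} (transported to $V^{(n-k)}$, which is itself $Z_kZ_n$-finite by Proposition \ref{factorseries}), combined with the exactness of the Whittaker coinvariant functor on the $GL(n-k,F)$-side, should force a generic $\rho_2^{(i)}$ to sit in the socle of the $\chi$-summand whenever that summand is nonzero, completing the argument.
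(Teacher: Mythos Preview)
Your setup in the first two paragraphs is fine and tracks the paper's argument: reduce to the eigenspace $(V_{U_{k,n-k}})_\chi$, pick an irreducible $M_{k,n-k}$-submodule $\rho_1\otimes\rho_2$ inside it, and observe that the whole proposition comes down to showing that $\rho_2$ is generic for $GL(n-k,F)$. The gap is in your last paragraph, where you try to force this genericity.

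Your proposed mechanism is that the Whittaker functional of $\pi$ ``factors through the $(U_{k+1}\cdots U_n,\theta^n)$-coinvariants, hence through $V^{(n-k)}$''. The first clause is true, but the second is false: the space $V^{(n-k)}$ is the quotient by $V(N_{n-k,A}\ltimes U_{k,n-k},\theta^n_{|N_{n-k,A}}\otimes 1_{U_{k,n-k}})$, i.e.\ the character used on $U_{k,n-k}$ is \emph{trivial} (this is the $\Psi^-$ step), whereas $\theta^n$ is nontrivial on the simple root subgroup $U_{\alpha_k}\subset U_{k,n-k}$. So the Whittaker functional does \emph{not} descend to $V^{(n-k)}$ (nor to $V_{U_{k,n-k}}$), and the tracking argument you sketch cannot get off the ground. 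The vaguer appeal to ``exactness of the Whittaker coinvariant functor on the $GL(n-k,F)$-side'' does not repair this, because exactness only tells you that generic subquotients survive, not that there is a generic one in the socle in the first place.

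What the paper does instead at this point is short and uses two outside inputs you did not invoke. From the embedding $\rho_1\otimes\rho_2\hookrightarrow V_{U_{k,n-k}}$, Bernstein's second adjointness gives a surjection $i_{P_{k,n-k}^-}^{G_n}(\rho_1\otimes\rho_2)\twoheadrightarrow\pi$. Since $\pi$ is $\theta^n$-generic, so is the induced representation, and Rodier's heredity theorem for Whittaker models then forces both $\rho_1$ and $\rho_2$ to be generic. In particular $\rho_2$ is generic for $GL(n-k,F)$, so $I_k(\rho_1\otimes\rho_2)=\rho_1\neq 0$, and you are done. The missing idea, then, is to pass from the Jacquet-module side back to $G_n$ via second adjointness so that the $G_n$-genericity of $\pi$ can be brought to bear on $\rho_2$.
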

\begin{proof}
 Suppose that the subspace $(V_{U_{k,n-k}})_{\chi,\infty}$ of $V_{U_{k,n-k}}$ is nonzero, hence $(V_{U_{k,n-k}})_{\chi}$ is nonzero. The space $(V_{U_{k,n-k}})_{\chi}$ is $M_{k,n-k}$-submodule of 
 $V_{U_{k,n-k}}$, so it has finite length, hence it contains some irreducible representation $\rho_1\otimes \rho_2$ of $M_{k,n-k}$. 
Hence $Hom_{M_{k,n-k}}(\rho_1\otimes \rho_2,V_{U_{k,n-k}})$ is nonzero, but then from Bernstein's second adjointness theorem (see \cite{Bu}, Theorem 3), we deduce that
$V$ is a quotient of the representation $\rho_1\times \rho_2$ parabolically induced from $\rho_1\otimes \rho_2$. As $V$ admits a nonzero Whittaker form, so does 
$\rho_1\times \rho_2$, and from a classical result of Rodier (Theorem 7 of \cite{R}), this implies that $\rho_1$ and $\rho_2$ are generic with respect to some 
nondegenrate character. As generiticity doesn't depend on the character for $GL(n-k,F)$, we deuce that $I_k(\rho_1\otimes \rho_2)=\rho_1$. Hence $I_k$ 
restricts non trivially to $(V_{U_{k,n-k}})_{\chi}$, and the image $I_k[(V_{U_{k,n-k}})_{\chi}]$ contains $\rho_1$ which is a nonzero submodule of $V^{(n-k)}_{\chi}$.
\end{proof}

We will also need to know that, if $k$ is an integer between $1$ and $n-1$ and $\chi$ is a character of $Z_k$, then the $Z_k$-modules $(V^{(n-k)})_{\chi}$ and 
$(V'^{(n-k)})_{\chi}$ are nonzero at the same time. We already know from the previous proposition that this is equivalent to the fact that the
 $Z_k$-modules $(V'^{(n-k)})_{\chi}$ and $(V_{U_{k,n-k}})_{\chi}$ are nonzero at the same time, and that $(V'^{(n-k)})_{\chi}$ nonzero implies 
that $(V_{U_{k,n-k}})_{\chi}$ is nonzero.

\begin{prop}\label{weights2}
If $(\pi,V)$ is a $\theta^n$-generic representation of $G_n$, and for $k\geq1$, let $U_{k,n-k}$ and $M_{k,n-k}\simeq G_k\times GL(n-k,F)$ be the subgroups of $G_n$ defined in the proof of 
Proposition \ref{factorseries}. 
Let $\chi$ be a character of the central subgroup $Z_kZ_n$ of $M_{k,n-k}$, and denote by the same letter its restriction to the central subgroup 
$Z_k$ of $G_k$, then the space
 $(V_{U_{k,n-k}})_{\chi}$ is nonzero if and only if the space $(V'^{(n-k)})_{\chi}$ is nonzero.  
\end{prop}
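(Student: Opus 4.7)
By the remark immediately preceding the statement, the only work is to prove $(V_{U_{k,n-k}})_\chi \neq 0 \Rightarrow (V'^{(n-k)})_\chi \neq 0$; the converse direction follows from combining the surjection $V^{(n-k)} \twoheadrightarrow V'^{(n-k)}$ (obtained by applying the exact functor $\Psi^-\Phi^{n-k-1}$ to the $P_n$-surjection $V \twoheadrightarrow V'$), the $Z_k$-finiteness provided by Lemma \ref{fin-dim}, and Proposition \ref{weights}.

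For the forward direction, I would mimic the opening of the proof of Proposition \ref{weights}. Starting from a nonzero $(V_{U_{k,n-k}})_\chi$, extract an irreducible $M_{k,n-k}$-subrepresentation $\rho_1 \otimes \rho_2 \hookrightarrow (V_{U_{k,n-k}})_\chi$, and via Bernstein's second adjointness obtain a nonzero $G_n$-intertwiner $\rho_1 \times \rho_2 \to V$ (parabolic induction from the parabolic opposite to $P_{(k;n-k)}$), which is surjective since $V$ is irreducible; Rodier's theorem again forces both $\rho_1$ and $\rho_2$ to be generic. Composing with the $P_n$-surjection $V \twoheadrightarrow V'$ produces a $P_n$-surjection $\rho_1 \times \rho_2 \twoheadrightarrow V'$, and applying the exact derivative functor $\Psi^-\Phi^{n-k-1}$ yields a $G_k$-surjection $(\rho_1 \times \rho_2)^{(n-k)} \twoheadrightarrow V'^{(n-k)}$.

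To produce a $\chi$-eigenvector in $V'^{(n-k)}$, I would analyze $(\rho_1 \times \rho_2)^{(n-k)}$ via a geometric-lemma/Leibniz-type filtration for the derivatives of a parabolic induction in the relevant classical group. The ``open cell'' subquotient has the form $\rho_1 \otimes (\rho_2)^{(n-k)}$; since $\rho_2$ is $GL(n-k,F)$-generic, its top derivative is one-dimensional, so this piece is isomorphic to $\rho_1$ and carries the desired $Z_k$-central character $\chi$. The final step is to argue that this $\rho_1$ is not entirely killed by the surjection onto $V'^{(n-k)}$: the Whittaker functional on $V$ factors through $V'$ (Whittaker functions being determined by their values on $N_n \subset P_n$), and its pullback to $\rho_1 \times \rho_2$ is, by Rodier's theory, supported on the open Bruhat cell, so the open-cell piece cannot land in the kernel.

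The main obstacle is exactly the last step: computing $(\rho_1 \times \rho_2)^{(n-k)}$ via a geometric lemma adapted to the mirabolic derivative functors of the groups $G_n$, identifying the open-cell subquotient, and tracking the Whittaker functional through the successive coinvariants $\Phi^-$ and $\Psi^-$ to certify survival of the $\rho_1$-factor. Case A is immediate because there $V \cong V'$, but cases B, C, D require the Mackey-theoretic analysis to be carried out carefully.
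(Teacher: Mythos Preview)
Your opening moves coincide with the paper's: reduce to the forward implication, extract an irreducible $\rho = \rho_1 \otimes \rho_2 \subset (V_{U_{k,n-k}})_\chi$, apply Bernstein's second adjointness to obtain $i_{P_{k,n-k}^-}^{G_n}(\rho) \twoheadrightarrow \pi$, and invoke Rodier to see that $\rho$ is generic.

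From this point the paper does \emph{not} attempt any geometric-lemma/Leibniz computation of $(\rho_1 \times \rho_2)^{(n-k)}$. It works directly with Whittaker functions. By Casselman--Shalika (Theorems~1.4 and~1.6 of \cite{CS}), the Whittaker form $L$ on $i_{P_{k,n-k}^-}^{G_n}(\rho)$, restricted to functions supported on the open cell $P_{k,n-k}^- U_{k,n-k}$, is given by the explicit integral $f \mapsto \int_{U_{k,n-k}} L^-(f(u))\theta^{-1}(u)\,du$, where $L^-$ is a Whittaker form on $\rho$. One then writes down a concrete element $\alpha$ supported on $P_{k,n-k}^-(U_{k,n-k}\cap K')$ for a small compact open $K'$, with $\alpha(1)=v$ chosen so that $L^-(v)\neq 0$, and computes $W_\alpha(a)$ for $a \in Z_k$ by hand: once $|a|$ is small enough, conjugation by $a$ pushes $U_{k,n-k}\cap K'$ into $\ker\theta$ and the integral collapses, giving $W_\alpha(a) = \chi(a)\,\delta_{U_{k,n-k}}^{1/2}(a)\,L^-(v) \neq 0$. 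A similar computation at a general $zg$ with $g\in G_k$, together with Propositions~\ref{Ker} and~\ref{Ker2}, shows that the image of $(\delta_{U_{k+2}}\cdots\delta_{U_n})^{-1/2} W_\alpha$ in $V'^{(n-k)}$ is a nonzero $\chi$-eigenvector.

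Your proposed route via a Leibniz filtration of $(\rho_1\times\rho_2)^{(n-k)}$ is natural in case~A, but---as you yourself flag---no such filtration for the mirabolic derivative functors is established in the paper (or readily available) for cases B, C, D; setting it up and then proving that the open-cell piece survives the surjection to $V'^{(n-k)}$ would amount to reproving, in a more elaborate language, exactly what the Casselman--Shalika integral formula delivers in one stroke. So the obstacle you isolate is real, and the paper's explicit construction of $W_\alpha$ is precisely the device that fills it.
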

\begin{proof}
We only need to prove that if $(V_{U_{k,n-k}})_{\chi}$ is nonzero, then the space $(V'^{(n-k)})_{\chi}$ is nonzero.\\
So suppose that the $M_{k,n-k}$-module $(V_{U_{k,n-k}})_{\chi}$ is nonzero, it is of finite length, hence it contains an irreducible 
$M_{k,n-k}$-submodule $\rho$. Call $P_{k,n-k}$ the parabolic subgroup $M_{k,n-k}U_{k,n-k}$, and $P_{k,n-k}^-$ its opposite parabolic subgroup 
(with unipotent radical $(U_{k,n-k})^-$).
 We already saw that by Bernstein's second adjointness theorem, the induced representation 
$i_{P_{k,n-k}^-}^{G_n}(\rho)$ has $\pi$ as a quotient, and therefore $i_{P_{k,n-k}^-}^{G_n}(\rho)$ is $\theta^n$-generic. Then from 
Theorem 7 of \cite{R}, the $M_{k,n-k}$-module $\rho$ is 
$\theta$-generic, where $\theta$ is the restriction of $\theta^n$ to the unipotent radical of the Borel of $M_{k,n-k}$.
 Both have the same Whittaker model $W(\pi,\theta^n)$, i.e. the (unique up to scalar) Whittaker form on the space of $i_{P_{k,n-k}^-}^{G_n}(\rho)$ facorises through the 
projection from $i_{P_{k,n-k}^-}^{G_n}(\rho)$ to $\pi$. Let $L^-$ be a nonzero $\theta$-Whittaker form on the space of $\rho$, 
by Theorems 1.4 and 1.6 of \cite{CS}, there is a nonzero Whittaker form $L$ on the space of $i_{P_{k,n-k}^-}^{G_n}(\rho)$
 whose restriction to the subspace $C_c^\infty(P_{k,n-k}^-\backslash P_{k,n-k}^-U_{k,n-k},(\delta_{U_k^-})^{1/2}\rho)$ of functions with support in $P_{k,n-k}^- U_{k,n-k}$ is given by 
 $$f\mapsto \int_{U_{k,n-k}} L^-(f(u))\theta^{-1}(u)du.$$
We denote by $\bar{L}$ the Whittaker form on the space of $\pi$ which lifts to $L$.
In particular, for any $\bar{f}$ in the space of $\pi$, which is the image of $f$ in $C_c^\infty(P_{k,n-k}^-\backslash P_{k,n-k}^-U_{k,n-k})$, one has 
$$\bar{L}(\bar{f})=\int_{U_{k,n-k}} L^-(f(u))\theta^{-1}(u)du.$$
Let $v$ be a vector in the space of $\rho$, such that $L^-(v)$ is nonzero. Let $K'$ be a compact subgroup of 
$G_n$, with Iwahori decomposition with respect to $P_{k,n-k}$, and such that 
$K'\cap M_{k,n-k}$ fixes $v$, then the function $\alpha$ equal to $u^-m u \mapsto \rho(m)v$ on $(U_{k,n-k})^- M_{k,n-k} (U_{k,n-k}\cap K')$, and zero outside,
 is well defined and belongs to the space $C_c^\infty(P_{k,n-k}^-\backslash P_{k,n-k}^-U_{k,n-k},\rho)$. We denote by $W_\alpha$ the corresponding
  Whittaker function $g\mapsto \bar{L}(\pi(g)\bar{\alpha})$. If $a$ belongs to the group $Z_k$, one has 
 $$\begin{aligned} W_\alpha(a) & = \int_{U_{k,n-k}} L^-(\alpha(ua))\theta^{-1}(u)du \\
                                 & = \chi(a)\delta_{U_{k,n-k}}^{-1/2}(a)\int_{U_{k,n-k}} L^-(\alpha(a^{-1}ua))\theta^{-1}(u)du \\
                                 & = \chi(a)\delta_{U_{k,n-k}}^{1/2}(a)\int_{U_{k,n-k}} L^-(\alpha(u))\theta^{-1}(aua^{-1})du \\
                                 & = \chi(a)\delta_{U_{k,n-k}}^{1/2}(a)L^-(v) \int_{U_{k,n-k}}\theta^{-1}(aua^{-1})du \end{aligned}$$
                                 
In this last integral, $u$ stays in the compact set $U_{k,n-k}\cap K'$, hence there is a (punctured) neighbourhood of zero in $Lie(Z_k)=F$, such that 
$a(U_{k,n-k}\cap K')a^{-1}$ is a subset of $Ker(\theta)$ when $a$ belongs to this neighbourhood. Finally, up to multiplication of the function 
$\alpha$ by a scalar, one has
 $$W_{\alpha}(a)=\chi(a)\delta_{U_{k,n-k}}^{1/2}(a)L^-(v)$$ whenever $a$ is this neighbourhood of zero.\\
 A similar computation gives the equality $$W_{\alpha}(zg)=\chi(z)\delta_{U_{k,n-k}}(z)^{1/2}c_{g}$$ for $z$ in $Z_k$ in a neighbourhood of zero and $g$ in $G_k$,
 where  $c_{g}$ is the constant $\int_{U_{k,n-k}} L^-(\alpha(ug))du$.                          
Hence from Propositions \ref{Ker}, \ref{Ker2}, and the equality $(\delta_{U_{k,n-k}})_{|G_k}=(\delta_{U_{k+1}}\dots \delta_{U_{n}})_{|G_k}$, we deduce that 
the vector $(\delta_{U_{k+2}}\dots \delta_{U_{n}})^{-1/2} W_{\alpha}$ in the space of $\pi'_{(n-k+1)}$ is such that its image in $\pi'^{(n-k)}$ is nonzero and 
belongs to the space $(\pi'^{(n-k)})_{\chi}$. This proves the proposition.
\end{proof}

A straightforward generalisation of the proof of the preceding proposition gives the following corollary.

\begin{cor}
Let $G$ be the $F$-points of a quasi-split reductive group defined over $F$. Let $P$ be a parabolic subgroup of $G$ with a Levi subgroup $M$, and $P^-$ 
its opposite subgroup with $P\cap P^-=M$. 
Let $(\pi,V)$ be a smooth $\theta$-generic representation of $G$, for some nondegenerate character $\theta$ of the unipotent radical $U$ of a Borel 
subgroup of 
$G$ contained in $P$. We denote by $j_{P^-}$ be the map defined in Theorem 3.4 of \cite{D} from $(V^*)^{U_{\theta}}$ to $(J_P(V)^*)^{M\cap U_{\theta}}$.
 If $L$ is a nonzero vector of the line $(V^*)^{U_{\theta}}$, then the linear form $j_{P^-}(L)$ restricts non trivially to any 
irreducible $M$-submodule of $J_P(V)$ whenever the Jacquet module $J_P(V)$ is nonzero.
\end{cor}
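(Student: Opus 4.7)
The plan is to follow the proof of Proposition \ref{weights2} essentially verbatim, replacing the group-specific constructions by their general quasi-split counterparts. Let $\rho$ be an irreducible $M$-submodule of $J_P(V)$. By Bernstein's second adjointness (Theorem 3 of \cite{Bu}), we get
$$\mathrm{Hom}_G(i_{P^-}^G(\rho),V)\;\cong\;\mathrm{Hom}_M(\rho,J_P(V))\;\neq\;0,$$
so $V$ receives a nonzero $G$-morphism from $i_{P^-}^G(\rho)$. Since $V$ is $\theta$-generic, so is $i_{P^-}^G(\rho)$, and by Rodier's Theorem 7 of \cite{R}, $\rho$ is generic with respect to the restriction $\theta_M$ of $\theta$ to $M\cap U_\theta$. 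Fix a nonzero $\theta_M$-Whittaker form $L^-$ on $\rho$ and a vector $v\in\rho$ with $L^-(v)\neq 0$.

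By the Casselman--Shalika construction (Theorems 1.4 and 1.6 of \cite{CS}), there is a nonzero $\theta$-Whittaker form $\tilde L$ on $i_{P^-}^G(\rho)$ whose restriction to the subspace $C_c^\infty(P^-\backslash P^-U,(\delta_{U^-})^{1/2}\rho)$ of functions supported on the big cell is given by
$$f\;\longmapsto\;\int_{U} L^-(f(u))\,\theta^{-1}(u)\,du.$$
By uniqueness of the Whittaker form, $\tilde L$ is the pullback of $L$ along the surjection of the previous paragraph, up to a nonzero scalar. Now, exactly as in Proposition \ref{weights2}, choose a compact open subgroup $K'$ of $G$ admitting an Iwahori decomposition with respect to $P$, small enough to fix $v$ through $K'\cap M$, and let $\alpha\in i_{P^-}^G(\rho)$ be the function supported on $P^-(U\cap K')$ given by $u^-mu\mapsto \rho(m)v$. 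A change of variables $u\mapsto aua^{-1}$ in the integral formula shows that for $a$ in the dominant chamber of the split component $A_M$ of $M$, deep enough for $aua^{-1}$ to lie in a prescribed neighbourhood of $1$ for all $u\in U\cap K'$, we obtain
$$W_\alpha(a)\;=\;\bar L(\pi(a)\bar\alpha)\;=\;\delta_{U_P}^{1/2}(a)\,\chi_\rho(a)\,L^-(v)\,c,$$
with $c$ a nonzero constant; and similarly $W_\alpha(ag)=\delta_{U_P}^{1/2}(a)\chi_\rho(a)c_g$ for $g\in M$.

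The map $j_{P^-}$ of Theorem 3.4 of \cite{D} is precisely characterised as the linear form on $J_P(V)$ which reads off the $\delta_{U_P}^{1/2}$-normalised leading asymptotic term of matrix coefficients of $\pi$ along $A_M^+$. The computation above therefore forces $j_{P^-}(L)$ restricted to $\rho$ to coincide, up to a nonzero scalar, with $L^-$, and in particular to be nonzero on $\rho$. The only genuine obstacle is a bookkeeping one: matching the integral formula coming from Casselman--Shalika with the canonical pairing underlying Delorme's map $j_{P^-}$, and tracking the modulus characters $\delta_{U_P}$ and $\delta_{U_P^-}$. No new ideas beyond those already used in the proof of Proposition \ref{weights2} are required.
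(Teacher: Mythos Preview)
Your proposal is correct and follows precisely the approach the paper indicates: the paper itself gives no detailed proof of this corollary, stating only that it is ``a straightforward generalisation of the proof of the preceding proposition,'' and what you have written is exactly that generalisation---Bernstein's second adjointness, Rodier's theorem, the Casselman--Shalika integral formula, and the asymptotic computation along $A_M$, all carried over from the specific groups $G_n$ to the general quasi-split setting. One small point worth making explicit: when you pass from ``$V$ receives a nonzero $G$-morphism from $i_{P^-}^G(\rho)$'' to ``$i_{P^-}^G(\rho)$ is $\theta$-generic,'' you are using that $V$ is irreducible (which is part of the paper's definition of \emph{generic}), so that the nonzero morphism is surjective and the pullback of $L$ is nonzero.
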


We now come back to the study of $F^*$-modules $E$ with finite factor series such that $F^*$ acts by a character on each quotient. From Lemma \ref{fin-dim}, 
any vector of $E$ will belong to a finite dimensional $F^*$-submodule $E'$ as in:

\begin{prop}\label{center-repr}
If $E'$ is a non zero finite dimensional $F^*$-submodule of $E$, then $E'$ has a basis $B$ in which the action of $F^*r$ is given by a block diagonal matrix 
$Mat_B (\tau(t))$ with each block of the form:
$$ \begin{pmatrix}
c (t) & c (t)P_{1,2} (v_F(t))     & c (t)P_{1,3} (v_F(t))     & \dots                      & c (t)P_{1,q} (v_F(t))   \\
                    & c (t) & c (t)P_{2,3} (v_F(t))     &  \dots                     & c (t)P_{2,q} (v_F(t))   \\
                    &                    & \ddots              &                            & \vdots            \\  
                    &                    &                     & c (t)      &  c (t)P_{q-1,q} (v_F(t)) \\ 
                    &                    &                     &                            & c(t)                                                                                                     
  \end{pmatrix},$$
for $c$ one of the $c_i$'s, $q$ a positive integer depending on the block, and the $P_{i,j}$'s being polynomials with no constant term of degree at most $j-i$.

\end{prop}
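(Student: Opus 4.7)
The plan is to exploit the topological splitting $F^{*}\simeq \mathcal{O}_{F}^{*}\times \varpi_{F}^{\mathbb{Z}}$ provided by the uniformizer $\varpi_F$ and the valuation $v_F$. As $E'$ is finite dimensional and the action smooth, the compact subgroup $\mathcal{O}_F^*$ fixes a finite-index open subgroup pointwise, hence acts semisimply through a finite quotient. This decomposes $E'$ as a finite direct sum $E'=\bigoplus_\chi E'_\chi$ over the characters $\chi$ of $\mathcal{O}_F^*$ that occur, and each $E'_\chi$ is stable under $\pi(\varpi_F)$ because $F^*$ is abelian.

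On each component $E'_\chi$ I would pick a basis putting $\pi(\varpi_F)|_{E'_\chi}$ into Jordan normal form. A block of size $q$ with eigenvalue $\lambda$ yields vectors $v_1,\dots,v_q$ satisfying $\pi(\varpi_F)v_j=\lambda v_j + v_{j-1}$ (with $v_0=0$), and for any $t=u\varpi_F^n$ the formula $\pi(t)|_{E'_\chi}=\chi(u)\pi(\varpi_F)^n$ combined with the binomial expansion of $(\lambda\,\mathrm{Id}+N)^n$ gives
$$\pi(t)v_j=\sum_{i=1}^{j} \chi(u)\binom{n}{j-i}\lambda^{n-(j-i)}v_i.$$
Defining the character $c(u\varpi_F^n):=\chi(u)\lambda^n$ of $F^*$ and the polynomial $P_{i,j}(n):=\binom{n}{j-i}\lambda^{-(j-i)}$ for $i<j$, one reads off that the matrix of $\pi(t)$ on this block has exactly the shape in the statement. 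Note that $P_{i,j}$ is a polynomial of degree $j-i$ with no constant term, because $\binom{0}{j-i}=0$ for $j>i$.

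The main point to verify is that each character $c$ produced this way is actually one of the $c_i$'s, not merely some arbitrary character of $F^*$. For this I would use the filtration $E'_i := E'\cap E_i$ that $E$ induces on $E'$. The vector $v_1$ at the top of any Jordan block is a simultaneous $F^*$-eigenvector with eigencharacter $c$; picking the smallest index $i$ for which $v_1\in E'_i$, the image of $v_1$ in the quotient $E'_i/E'_{i-1}\hookrightarrow E_i/E_{i-1}$ is nonzero, and since $F^*$ acts there by $c_i$, it follows that $c=c_i$. This identification is the only step requiring genuine input from the hypothesis on $E$; the rest is a direct combination of Jordan decomposition and the binomial formula.
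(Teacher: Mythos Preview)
Your argument is correct and in fact cleaner than the paper's. Both proofs begin the same way, decomposing $E'$ under the compact group $\mathcal{O}_F^{*}$ into isotypic pieces $E'_\chi$, each stable under $\tau(\varpi_F)$. The divergence comes in the analysis of a single piece. The paper reduces only to the level of a characteristic subspace of $\tau(\varpi_F)$, takes an arbitrary basis in which $c^{-1}\tau$ is upper unitriangular with unknown smooth entries $A_{i,j}(t)$, and then runs a somewhat involved induction on the block size $q$: a recursion $M(k)=M(1)M(k-1)$ gives polynomials $R$ and $R'$ controlling $A_{1,q}(\varpi^{k})$ for $k\ge 0$ and $k\le 0$ separately, and a final polynomial-identity argument is needed to show $R=R'$. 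You instead pass all the way to Jordan normal form of $\tau(\varpi_F)$ on $E'_\chi$, after which the entries are computed explicitly by the binomial expansion of $(\lambda\,\mathrm{Id}+N)^{n}$; the polynomials $P_{i,j}(n)=\binom{n}{\,j-i\,}\lambda^{-(j-i)}$ pop out immediately, and your final filtration step pinning down $c$ as one of the $c_i$'s is exactly right. One small point you leave implicit is that the binomial expansion $\sum_{k}\binom{n}{k}\lambda^{n-k}N^{k}$ remains valid for all $n\in\mathbb{Z}$ (using the generalized binomial coefficient as a degree-$k$ polynomial in $n$, and $\lambda\neq 0$ since $\tau(\varpi_F)$ is invertible); this is standard, but it is precisely what replaces the paper's $R=R'$ argument and is worth stating. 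The upshot is that your route is shorter and yields the $P_{i,j}$ in closed form, whereas the paper's route avoids Jordan form but pays for it with a longer inductive computation.
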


\begin{proof}
First we decompose $E'$ as a direct sum under the action of the compact abelian group $U_F$. Because $E'$ has a filtration by the spaces $E' \cap E_i$,
and that $F^*$ acts on each sub factor as one of the $c_i$'s, the group $U_F$ acts on each weight space as the restriction of one of the $c_i$'s. Now each weight space is
stable under $F^*$ by commutativity, and so we can restrict ourselves to the case where $E'$ is a weight-space of $U_F$.\\
Again $E'$ has a filtration, such that $F^*$ acts on each sub factor as one of the $c_i$'s (with all these characters having the same restriction to $U_F$), 
say $c_{i_1}, \dots, c_{i_k}$, in particular, we deduce that the endomorphism $\tau(\w)$ has a triangular matrix in a basis adapted to this filtration, 
with eigenvalues $c_{i_1}(\w), \dots, c_{i_k}(\w)$. As $\tau(\w)$ is trigonalisable, the space $E'$ is the direct sum its characteristic subspaces, and
again these characteristic subspaces are stable under $F^*$.\\
 So finally one can assume that $E'$ is a characteristic subspace for some eigenvalue $c(\pi)$ of $\tau(\w)$, on which $U_F$ acts as the character $c$, 
where $c$ is one of the $c_i$'s. \\
 
Hence there is a basis $B$ of $E'$ such that $$Mat_B (c^{-1}(t)\tau(t))= \begin{pmatrix}
1 & A_{1,2} (t)     & A_{1,3}(t)     & \dots                      & A_{1,q}  (t)   \\
                    & 1 & A_{2,3} (t)     &  \dots                     & A_{2,q} ( t )   \\
                    &                    & \ddots              &                            & \vdots            \\  
                    &                    &                     & 1      &  A_{q-1,q} ( t ) \\ 
                    &                    &                     &                            & 1                                                                                                    
  \end{pmatrix} $$ for any $t$ in $F^*$, where the $A_{i,j}$'s are smooth functions on $F^*$. So we only have to prove that the $A_{i,j}$'s are polynomials
 of the valuation of $F$ with no constant term.\\
We do this by induction on $q$.\\
 It is obvious when $q=1$.
Suppose the statement holds for $q-1$, and suppose that $E'$ is of dimension $q$, with basis $B=(v_1,\dots,v_{q})$. Considering the two 
$c^{-1}\tau(F^*)$-modules $Vect(v_1,\dots,v_{q-1})$ and  $Vect(v_1,\dots,v_{q})/Vect(v_1)$ of dimension $q-1$, we deduce that for every couple $(i,j)$ different from 
$(1,q)$, there is a polynomial with no constant term $P_{i,j}$ of degree at most $j-i$, such that $A_{i,j}=P_{i,j}\circ v_F$. Now because $c^{-1}\tau$ 
is a representation of $F^*$, and because the $P_{i,j}\circ v_F$'s vanish on $U_F$ for $(i,j)\neq (1,q)$, we deduce that $A_{1,q}$ is a smooth morphism from $(U_F,\times)$ 
to $(\mathbb{C},+)$, which must be zero because $(\mathbb{C},+)$ has no nontrivial compact subgroups. From this we deduce that $A_{1,q}$ is invariant under translation by
 elements of $U_F$ (i.e. $A_{1,q}(\w^k u)= A_{1,q}(\w^k )$ for every $U$ in $U_F$).\\
Denote by $M(k)$ the matrix $Mat_B (c^{-1}\tau(\w^k))$ for $k$ in $\mathbb{Z}$. One has $M(k)=M(1)M(k-1)$ for $k\geq 1$, which in implies 
$A_{1,q}(\w^{k})= \sum_{j=2}^{q-1} P_{1,j}(1)P_{j,q}(k-1) + A_{1,q}(\w^{k-1})+ A_{1,q}(\w)= Q(k)+ A_{1,q}(\w^{k-1})+ A_{1,q}(\w)$ for $Q$
 a polynomial of degree at most $q-2$. This in turn implies that $A_{1,q}(\w^{k})= \sum_{l=1}^{k-1} Q(l) +k A_{1,q}(\w)= R(k)$ for $R$ a polynomial of degree at most $q-1$,
 according to the theory of Bernoulli polynomials, for any $k\geq0$. The same reasoning for $k\leq0$, implies $A_{1,q}(\w^{k})=R'(k)$ for $R'$ a polynomial 
of degree at most $q-1$, for any $k\leq 0$. We need to show that $R=R'$ to conclude.\\ 
We know that $M(k)$ is a matrix whose coefficients are polynomials in $k$ for $k>0$ of degree at most $q-1$, we denote it by $P(k)$. The matrix $M(k)$
 has the same property for $k<0$, 
we denote it by $P'(k)$.
 Moreover for any $k\geq0$ and $k'\leq 0$, with $k+k'\geq 0$, one has $P(k+k')=P(k)P'(k')$. Fix $k >q-1$, then the matrices $P(k +k')$ and $P(k )P'(k')$ 
are equal for $k'$ in $[1-q,0]$, as their coefficients are polynomials in $k'$ with degree at most $q-1$, the equality $P(k +z')=P(k )P'(z')$ holds for any complex 
number $z'$. Now fix such a complex number $z' $, the equality $P(k +z')$ and $P(k )P'(z')$ holds for any integer $k >q-1$, and as both matrices have coefficients which
 are polynomials in $k$, this equality actually holds for any complex number $z$, so that $P(z +z')$ equals $P(z )P'(z')$ for any complex numbers $z$ and $z'$.\\
 As $P(0)=I_q$, we deduce that $P$ and $P'$ are equal on $\mathbb{C}$, and this implies that $R$ is equal to $R'$. 
\end{proof}

From this we deduce the following theorem, giving an expansion at infinity of Whittaker functions of generic representations of $G_n$. 
We consider $\pi$ a $\theta$-generic representation of $G_n$. Let $R$ be the set of positive integers between $1$ and $n-1$ such that the nonzero derivatives of $\pi$ with respect to $\theta$ are the finite length representations $\pi^{(n-k)}$ of $G_k$, for $k\in R$. We then denote for each $k\in R$, by $(c_{l_k,k})_{l_k}$ the family of characters of $Z_k$ given by the action of this group on the irreducible sub-quotients of $\pi^{(n-k)}$. We finally set $V=\{1,\dots,n-1\}-R$.

\begin{thm}\label{restorus}
For any $W$ in $W(\pi,\theta)$, the function \[W(z_1 \dots z_{n-1})\] is a linear combination of products of the form 
\[\prod_{k\in R} [c_{l_k,k}\d_{U_k+1}^{1/2}\dots \d_{U_n}^{1/2}](z_k)v^{m_k}(z_k)\phi_k(z_k) \prod_{l\in V}\phi_l(z_l),\] for non negative integers $m_k$, functions $\phi_k$ in $\mathcal{C}_c^\infty(Lie(Z_k))$, and 
functions $\phi_l$ in $\mathcal{C}_c^\infty(Lie(Z_l))$ vanishing at $z_l=0$.
\end{thm}

\begin{proof} Actually we prove the following stronger statement, which is satisfied by $\pi_{(0)}$ according to Proposition \ref{factorseries}: 
\begin{st}
Let $\pi$ be a finite length submodule of 
$\mathcal{C}^\infty(N_n\backslash P_n,\theta)$. Let $R$ be the set of positive integers between $1$ and $n-1$ such that the nonzero derivatives of $\pi$ with respect to $\theta$ are the finite length representations $\pi^{(n-k)}$ of $G_k$, for $k\in R$. We then denote for each $k\in R$, by $(c_{l_k,k})_{l_k}$ the family of characters of $Z_k$ given by the action of this group on the irreducible sub-quotients of $\pi^{(n-k)}$. We finally set 
$V=\{1\dots,n-1\}-R$. For any $W$ in $\pi$, the function \[W(z_1,\dots,z_{n-1})=W(z_1 \dots z_{n-1})\] is a linear combination of products of the form \[\prod_{k\in R} [c_{l_k,k}\d_{U_k+1}^{1/2}\dots \d_{U_n}^{1/2}](z_k)v^{m_k}(z_k)\phi_k(z_k) \prod_{l\in V}\phi_l(z_l),\] for non negative integers $m_k$, functions $\phi_k$ in $\mathcal{C}_c^\infty(Lie(Z_k))$, and 
functions $\phi_l$ in $\mathcal{C}_c^\infty(Lie(Z_l))$ vanishing at $z_l=0$.
\end{st}
 
 The proof is by induction on $n$.\\
 Let $W$ belong to the space of $\pi$. We denote by $v$ its image in the space $E$ of $\pi^{(1)}$. 
The vector $v$ belongs to a finite dimensional $Z_{n-1}$-submodule $E'$ of $E$, on which $Z_{n-1}$ acts by a matrix of 
the form determined in Proposition \ref{center-repr}. If $\pi^{(1)}=\{0\}$, we set $v=0$.
If $\pi^{(1)}\neq\{0\}$, we fix a basis $B=(e_{1},\dots,e_{q})$ of $E'$, and denote by $M(a)$ the matrix $M_B(\tau(a))$ (with $a$ in $Z_{n-1}$ and 
$\tau(a)=\pi^{(1)}(a)$), hence we have $\tau(a)e_l= \sum_{k=1}^q M(a)_{k,l}e_k$ for each $l$ between $1$ and $q$.\\
Taking preimages $\tilde{E}_{1},\dots,\tilde{E}_{q}$ of $e_{1},\dots,e_{q}$ in 
$\pi_{(0)}$, we denote by $\tilde{E}$ the function vector $\begin{pmatrix}
\tilde{E}_{1} \\ \vdots \\  \tilde{E}_{q} \end{pmatrix}$.\\   
If the image $v$ of $W$ in $\pi^{(1)}$ is equal to $x_1 e_1+\dots+x_q e_q$, there is an integer $M$, 
such that for every $(z_1,\dots,z_{n-2})$ in $Z_1\times\dots\times Z_{n-2}$, the function 
$$W(z_1,\dots,z_{n-1})-(x_1,\dots,x_q)\tilde{E}(z_1,\dots,z_{n-1})$$ vanishes for $|z_{n-1}|_F\leq q_F^{-M}$. We denote by $S$ the function 
$(x_1,\dots,x_q)\tilde{E}$. If $\pi^{(1)}=\{0\}$ we set 
$S=0$.\\ 
Because of Remark \ref{vanishlarge}, there is an integer $M'$, such that for any $(z_1,\dots,z_{n-2})$ in 
$Z_1\times\dots\times Z_{n-2}$, and any $z_{n-1}$ in $Z_{n-1}$ of absolute value greater than $q_F^{M'}$, 
both $W(z_1,\dots,z_{n-1})$ and $S(z_1,\dots,z_{n-1})$ are zero, so that the difference $D(z_1,\dots,z_{n-1})$ of the two
functions is a smooth function which vanishes whenever $z_{n-1}$ has absolute value outside $[q_F^{-M},q_F^{M'}]$.
 Moreover there is a compact subgroup $U$ of $Z_{n-1}(\o)$ independent of $(z_1,\dots,z_{n-1})$ such that both functions (hence $D$) are invariant when $z_{n-1}$ 
is multiplied by an element of $U$. Denoting by $(z_\alpha)_{\alpha \ \in \ A}$ a finite set of representatives for $$\{ z\ | q_F^{-M}\leq |z_{n-1}|_F \leq q_F^{M'}\}/U,$$
 this implies that $D(z_1,\dots,z_{n-1})$ is equal to $\sum_{\alpha \ \in \ A} D(z_1,\dots,z_{n-2},z_{\alpha})\mathbf{1}_{z_\alpha U}(z_{n-1})$,
 which we can always write as $\sum_{\alpha \ \in \ A} D(z_1,\dots,z_{n-2},z_{\alpha})\d_{U_n}^{1/2}(z_{n-1}) D_\alpha(z_{n-1})$ with 
$D_\alpha=\d_{U_n}^{-1/2}\mathbf{1}_{z_\alpha U}$ in $C_c^\infty(Lie(Z_{n-1}))$.\\ 
Each function $D(z_1,\dots,z_{n-2},z_{\alpha})$ is equal to $W(z_1,\dots,z_{\alpha})-S(z_1,\dots,z_{\alpha})$, and the restrictions to $P_{n-1}$ of the 
functions $\delta_{U_n}^{-1/2}[\pi(z_{\alpha}) D]$ belong to the smooth submodule $\Phi^-(\pi)$ of 
$C^\infty(N_{n-1}\backslash P_{n-1},\theta)$, which still satisfies the hypothesis of the statement.\\
Hence, by induction hypothesis, the function 
$D$ is of the expected form. Notice that if 
$\pi^{(1)}=\{0\}$, one has $W=D$ and this ends the proof.\\
                                                                                                                               
Now, call $p$ the projection $W'\mapsto (\d_{U_n}^{1/2} W')_{|P_{n-1}}$ from $\pi_{(0)}$ to $\pi^{(1)}$. 
If $\pi^{(1)}\neq \{0\}$, then for any $a$ in $Z_{n-1}$, one has $\rho(a)p(\tilde{E}_{l})= \sum_{k=1}^q M(a)_{k,l}p(\tilde{E}_{k})$. Hence as $\rho(a)p(\tilde{E}_{l})$ equals $\d_{U_n}^{-1/2}(a)\pi_{(0)}(a)\tilde{E}_{l}$,
 we deduce that 
there is a punctured neighbourhood of zero in $Z_{n-1}$, such that for each $l$, the function 
$\d_{U_n}^{-1/2}(a)\pi_{(0)}(a)\tilde{E}_{l} -\sum_{k=1}^q M(a)_{k,l}\tilde{E}_{k}$ 
 vanishes on elements $g=pac$ of $G_{n-1}$ ($p$ in $P_{n-1}$, $a$ in $Z_{n-1}$, $c$ in $G_{n-1}(\o)$) such that $a$ is in this neighbourhood.\\
In particular, there exists $N_a$ such that for every $(z_1,\dots,z_{n-1})$, the vector function 
$$\d_{U_n}^{-1/2}(a)\pi_{(0)}(a)\tilde{E}(z_1,\dots,z_{n-1}) -^t\!M(a)\tilde{E}(z_1,\dots,z_{n-1})$$ 
vanishes when we have $|z_{n-1}|_F\leq q_F^{-N_a}$.\\

This implies, as in the proof of Proposition 2.6. of \cite{CP}, the following claim:

\begin{claim}\label{claim}
There is actually an $M''$, such that for every $z$ in $Z_{n-1}$, with $|z_{n-1}|_F\leq q_F^{-M''}$, and every $a$ in $Z_{n-1}$, 
with $|a|_F\leq 1$, the function $\tilde{E}(z_1,\dots,z_{n-1}a)$ is equal to $\d_{U_n}^{1/2}(a){^t\!M(a)}\tilde{E}((z_1,\dots,z_{n-1})$.
\end{claim}
\begin{proof}[Proof of the claim] We denote $(z_1,\dots,z_{n-2})$ by $x$, and $z_{n-1}$ by $z$.\\
If $U$ is an open compact subgroup of $Z_{n-1}(\o)$, 
such that $\tilde{E}$ and the homomorphism $a \in Z_{n-1} \mapsto M(a) \in G_q(\mathbb{C})$ are $U$ invariant, we denote by $u_1,\dots,u_s$ the representatives of 
$Z_{n-1}(\o)/U$, and by $\omega$, the canonical generator of $Z_{n-1}/Z_{n-1}(\o)$.
We put $M''=\max_{i,j}(N_{u_i},N_{\omega})$.\\
 Then for $z$ in $\{z\in Z_{n-1}, |z|_F\leq q_F^{-M''}\}$, and $a=\omega^r u_i u$ in  $\{z\in Z_{n-1}, |z|_F\leq 1\}$ 
(with $u$ in $U$, and $r\in \mathbb{N}$), we have 
$$\tilde{E}(x,za)=\tilde{E}(x,z\omega^r u_i)= \d_{U_n}^{1/2}(u_i){^t\!M(u_i)}\tilde{E}(x,z\omega^r)$$ 
because $z\omega^r$ belongs to $\{z\in Z_{n-1}, |z|_F\leq q_F^{-M''}\}\subset \{z\in Z_{n-1}, |z|_F\leq q_F^{-N_{u_i}}\}$.
But if $r\geq 1$, again one has $$\tilde{E}(x,z\omega^{r})=\d_{U_n}^{1/2}(\omega){^t\!M(\omega)}(\omega_i)\tilde{E}(x,z\omega^{r-1}),$$ 
and $z\omega^{r-1}$ belongs to $$\{z\in Z_{n-1}, |z|_F\leq q_F^{-N_2}\}\subset \{z\in Z_{n-1}, |z|_F\leq q_F^{-N_{\omega}}\},$$ 
and repeating this step, we deduce the equality $\tilde{E}(x,za)= \d_{U_n}^{1/2}(a){^t\!M(a)} \tilde{E}(x,z).$ \end{proof}
 
Hence there is an element $z_0$ in $Z_{n-1}$ with $|z_0|_F=q_F^{-M''}$,
such that for every $(z_1,\dots,z_{n-2})$ in $Z_1\times\dots\times Z_{n-2}$, the vector $\tilde{E}(z_1,\dots,z_{n-1})$ is equal to 
$$\d_{U_n}^{1/2}(z_{n-1})^t\!M(z_{n-1})(z_{n-1})[\d_{U_n}^{-1/2}(z_{0}){^t\!M(z_{0}^{-1})}]\tilde{E}(z_1,\dots, z_{n-2},z_{0})$$ 
for any $z_{n-1}$ with $|z_{n-1}|_F\leq1$.\\
Hence the function $\mathbf{1}_{\{|z_{n-1}|\leq1\}}S(z_1,\dots,z_{n-1})$ is equal to 
$$(x_1,\dots,x_q)^t\!M(z_{n-1})(z_{n-1})[\d_{U_n}^{-1/2}(z_{0}){^t\!M(z_{0}^{-1})}]\tilde{E}(z_1,\dots, z_{n-2},z_{0})\d_{U_n}^{1/2}(z_{n-1})\mathbf{1}_{\{|z_{n-1}|\leq1\}}.$$
One proves as for the function $D$, that function $\mathbf{1}_{\{|z_{n-1}|>1\}}(z_{n-1})S(z_1,\dots,z_{n-1})$ is of the form 
$$\sum_{\beta \ \in \ B} S(z_1,\dots,z_{n-2},z_{\beta})\d_{U_n}^{1/2}(z_{n-1}) S_\beta(z_{n-1})$$
 with 
$S_\beta$ in $C_c^\infty(F)$ for some finite set $B$.\\
By induction hypothesis again, applied to the function $(\d_{U_n}^{-1/2} \tilde{E}_i)(z_1,\dots,z_{n-2},z_{0})$ and the function
 $(\d_{U_n}^{-1/2}S)(z_1,\dots,z_{n-2},z_{\beta})$, 
we deduce that the function $S=\mathbf{1}_{\{|z_{n-1}|\leq 1\}}S + \mathbf{1}_{\{|z_{n-1}|>1\}}S$ is a sum of functions of the requested form. The statement follows as the function $W$ equals $D+S$.\end{proof}

\section{$L^2(Z_nN_n\backslash G_n)$ and discrete series}

First we characterise the Whittaker functions which belong to $\int_{N_n\backslash P_n} |W(p)|^2 dp$ in terms of exponents of 
the ``shifted derivatives'' (see \cite{B}, 7.2.). This result has been used in \cite{M}.\\ 
We say that a character of a $F^*$ is positive if its (complex) absolute value, is of the form $|\ |_F^r$ for some positive real $r$.

\begin{thm}\label{l2p}
Let $\theta$ be a nondegenerate character of the group $N_n$, and $\pi$ be a $\theta$-generic representation of $G_n$, 
let $R$ be defined as before Theorem \ref{restorus}, and let $c_{1,k},\dots, c_{r_k,k}$ be the characters of $Z_k$ 
appearing in a composition series of $\tau=\pi^{(n-k)}$ for 
$k\in R$. Then the integral $$\int_{N_n\backslash P_n} |W(p)|^2 dp$$
converges for any $W$ in $\pi$ if and only if all the characters $c_{i_k,k}\d_{U_{k+1}}^{1/2}$ are positive.

\end{thm}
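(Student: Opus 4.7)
The strategy is to reduce the integral over $N_n \backslash P_n$ to an integral over the torus $A_{n-1}$ via the Iwasawa decomposition, apply the asymptotic expansion of Theorem \ref{restorus}, and check convergence on each $Z_k$-component separately. First, since $P_n = G_{n-1} \ltimes U_n$ and $N_n = N_{n-1} U_n$, one identifies the measure spaces $N_n \backslash P_n$ and $N_{n-1} \backslash G_{n-1}$. Then the Iwasawa decomposition $G_{n-1} = N_{n-1} A_{n-1} K_{n-1}$ gives
$$\int_{N_n \backslash P_n} |W(p)|^2 dp = \int_{K_{n-1}} \int_{A_{n-1}} |W(ak)|^2 \delta_{B_{n-1}}^{-1}(a) \, da \, dk.$$
Since $K_{n-1}$ is compact and $\pi$ is smooth, the $K_{n-1}$-orbit of any $W$ is finite-dimensional, and convergence for every $W$ in $\pi$ is equivalent to finiteness of $\int_{A_{n-1}} |W(a)|^2 \delta_{B_{n-1}}^{-1}(a) \, da$ for every such $W$.

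Second, using $A_{n-1} = Z_1 \cdots Z_{n-1}$ from Lemma \ref{torusdec}, apply Theorem \ref{restorus} to write $W(z_1, \dots, z_{n-1})$ as a finite sum of products $\prod_{k=1}^{n-1} [c_{i_k,k} \delta_{U_{k+1}}^{1/2} \cdots \delta_{U_n}^{1/2}](z_k) v_F(z_k)^{m_k} \phi_k(z_k)$. Because each $Z_k$ is isomorphic to $F^*$ and each $\phi_k \in C_c^\infty(F)$ is locally constant near $0$ and vanishes for $|z_k|_F$ large, no convergence issue arises as $|z_k|_F \to \infty$; only the behaviour near $0$ matters. The modular character $\delta_{B_{n-1}}$ decomposes multiplicatively along the product structure of the torus, and exploiting the decomposition $N_{n-1} = U_2 \cdots U_{n-1}$ together with a direct root-theoretic calculation, one verifies that the net factor obtained by combining $|[\delta_{U_{k+1}}^{1/2} \cdots \delta_{U_n}^{1/2}](z_k)|^2$ with the restriction of $\delta_{B_{n-1}}^{-1}$ to the $Z_k$-component reduces exactly to $\delta_{U_{k+1}}(z_k)$.

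Third, the integral thus decomposes (on a term-by-term basis in the expansion) into a product over $k$ of one-variable integrals
$$\int_{F^*} |c_{i_k,k}(z)|^2 \delta_{U_{k+1}}(z) \, v_F(z)^{2m_k} |\phi_k(z)|^2 \, d^*z.$$
Passing to the uniformizer expansion $z = \varpi^\nu u$ with $u \in \mathfrak{O}_F^*$, this integral is a geometric-polynomial series in $\nu$ of the form $\sum_{\nu \gg 0} |c_{i_k,k}(\varpi)|^{2\nu} q_F^{-d\nu} \nu^{2m_k}$ (where $\delta_{U_{k+1}}(z) = |z|_F^d$), which converges if and only if $|c_{i_k,k}(z)|^2 \delta_{U_{k+1}}(z) = |z|_F^{2s}$ with $s > 0$, i.e., precisely when $c_{i_k,k} \delta_{U_{k+1}}^{1/2}$ is positive. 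Summing over all terms yields the ``if'' direction.

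For the ``only if'' direction, if some $c_{i_k,k} \delta_{U_{k+1}}^{1/2}$ fails to be positive, one must exhibit $W$ for which the integral diverges. By Proposition \ref{weights2}, every weight $c_{i_k,k}$ of $\pi^{(n-k)}$ is realized by an actual Whittaker function whose restriction detects that character. The principal obstacle is ruling out accidental cancellations between distinct terms of the expansion: this is handled by the linear independence of distinct quasi-characters of $F^*$ (two quasi-characters with distinct values on $\varpi_F$ cannot cancel each other in a sum of products weighted by valuations), which allows one to isolate, by averaging $W$ against suitable characters of $A_{n-1}$ or translating by diagonal elements, a piece of the expansion genuinely governed by the bad character. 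The resulting one-dimensional integral is then divergent by the computation of step three, forcing divergence of the whole integral.
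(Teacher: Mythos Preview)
Your ``if'' direction matches the paper's proof almost exactly: Iwasawa decomposition, the coordinates $z_1,\dots,z_{n-1}$ from Lemma~\ref{torusdec}, the expansion of Theorem~\ref{restorus}, and the modular-character identity reducing the net weight on $Z_k$ to $\delta_{U_{k+1}}$ (the paper phrases this as $(\delta_{U_n})_{|Z_k}=\delta_{U_{k+1}}$).

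For the ``only if'' direction you take a different route from the paper, and your execution has a gap. The paper argues by \emph{induction on $n$}: assuming the integral converges for all $W$, Fubini and smoothness give convergence of $\int_{A_{n-2}}|W|^2\delta_{N_{n-1}}^{-1}$, and the identity $\delta_{N_{n-1}}|_{A_{n-2}}=\delta_{N_{n-2}}\delta_{U_{n-1}}$ reduces this to the analogous statement for $\Phi^-\pi$ on $P_{n-1}$; by induction the characters for $k\le n-2$ are positive. Only the case $k=n-1$ is treated directly, and there the paper does not invoke Proposition~\ref{weights2} but rather takes an eigenvector $v\in\Psi^-\pi$ for the bad character, lifts it to $W$ via Proposition~\ref{Ker}, and shows (as in Claim~\ref{claim}) that $W(z_{n-1}g_0)=\delta_{U_n}^{1/2}(z_{n-1})c_{1,n-1}(z_{n-1})W(g_0)$ for $|z_{n-1}|$ small and a suitable $g_0$ with $W(g_0)\ne0$. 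Fubini and smoothness then isolate the one-variable $Z_{n-1}$-integral, which visibly diverges.

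Your approach via Proposition~\ref{weights2} can be made to work, but not the way you describe. The \emph{proof} of Proposition~\ref{weights2} (not its statement) already constructs a Whittaker function $W_\alpha$ with $W_\alpha(a)=\chi(a)\delta_{U_{k,n-k}}^{1/2}(a)L^-(v)$ for $a$ in $Z_k$ near zero --- a \emph{pure} term, with no other characters present. With that $W_\alpha$ in hand, Fubini plus smoothness isolates the $Z_k$-integral directly and gives the contradiction. There is therefore no ``principal obstacle'' of cancellation to overcome, and your appeal to linear independence of quasi-characters and to averaging against characters of $A_{n-1}$ is both unnecessary and, as written, not a proof: you have not said which averaging operator you apply, nor why it preserves $L^2$-convergence, nor how it kills the cross-terms $c_{i_k,k}\overline{c_{j_k,k}}$ that appear when you square the expansion of Theorem~\ref{restorus}. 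Either follow the paper's induction, or cite the explicit $W_\alpha$ from the proof of Proposition~\ref{weights2} and drop the linear-independence paragraph entirely.
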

\begin{proof}
Again we prove the stronger statement:
\begin{st}
  Let $(\pi,V)$ be a $P_n$-submodule of $C^{\infty}(N_n\backslash P_n,\theta)$, such that for every $k$ in $R$, the $G_{k}$-module 
$\tau=\pi^{(n-k)}=\Psi^{-}(\Phi^{-})^{n-k-1}(\pi)$ has a composition series such that, on each respective quotient, the central subgroup $Z_{k}$ acts by 
the characters $c_{1,k},\dots, c_{r_k,k}$.
 Then the integral $$\int_{N_n\backslash P_n} |W(p)|^2 dp$$
converges for any $W$ in $\pi$ if and only if all the characters $c_{i_k,k}\d_{U_{k+1}}^{1/2}$.
 \end{st}
 
Suppose first that all the characters $c_{i_k,k}\d_{U_{k+1}}^{1/2}$ are positive. Let $W$ belong to the space of $\pi$, first we notice the equality
$$\int_{N_n\backslash P_n} |W(p)|^2 dp=\int_{N_{n-1}\backslash G_{n-1}} |W(g)|^2 dg.$$

Now the Iwasawa decomposition reduces the convergence of this integral to that of $$\int_{A_{n-1}} |W(a)|^2 \d_{N_{n-1}}^{-1}(a) d^*a$$ 
Using coordinates $(z_1,\dots,z_{n-1})$ (see Lemma \ref{torusdec}) of ${A_{n-1}}$, the function $\d_{N_{n-1}}^{-1}(z_1,\dots,z_{n-1})$ is equal to 
$\prod_{k=1}^{n-2} (\d_{U_{k+1}}\dots\d_{U_{n-1}})^{-1}(z_k)$.\\
According to Theorem \ref{restorus} the function $|W(z_1,\dots,z_{n-1})|^2$ is bounded by a sum of functions of the form 
\[\prod_{k\in R} [c_{l_k,k}\d_{U_k+1}^{1/2}\dots \d_{U_n}^{1/2}](z_k)v^{m_k}(z_k)\phi_k(z_k) \prod_{l\in V}\phi_l(z_l),\] for non negative integers $m_k$, functions $\phi_k$ in $\mathcal{C}_c^\infty(Lie(Z_k))$, and 
functions $\phi_l$ in $\mathcal{C}_c^\infty(Lie(Z_l))$ vanishing at $z_l=0$.\\
Hence our integral will converge if the same is true of the integrals 
$$\int_{A_{n-1}} \prod_{k\in R} [c_{l_k,k}\d_{U_k+1}^{1/2}\dots \d_{U_n}^{1/2}](z_k)v^{m_k}(z_k)\phi_k(z_k) \prod_{l\in V}\phi_l(z_l) dz_1\dots dz_{n-1},$$ i.e. if the integrals 
 $\int_{Z_k} |c_{i_k,k}|(z_k)|c_{l_k,k}|(z_k)\d_{U_n}(z_k) v_F(t_k)^{m_k}\phi_k(t_k)dz_k$ converge for any $k\in R$.\\
But the restriction of $\d_{U_n}$ to $Z_k$ is equal to $\d_{U_{k+1}}$, so the convergence follows from our assertion on the characters $c_{i_k,k}\d_{U_n}^{1/2}$.\\
Conversely, suppose that every $W$ in $\pi_{(0)}$ belongs to the space $L^2(N_n\backslash P_n)$ corresponding to a right invariant measure on $N_n\backslash P_n$.\\
By Iwasawa decomposition, one gets that $\int_{N_n\backslash P_n} |W(p)|^2 dp=\int_{N_{n-1}\backslash G_{n-1}} |W(g)|^2 dg$ is equal 
to $\int_{A_{n-1}\times F_n}|W(ak)|^2 \d_{N_{n-1}}^{-1}(a)d^*adk$ which is greater than $dk(U)\int_{A_{n-1}}|W(a)|^2 \d_{N_{n-1}}^{-1}(a)d^*a$ 
for some compact open subgroup $U$ fixing $W$. In particular the integral $\int_{A_{n-1}}|W(a)|^2 \d_{N_{n-1}}^{-1}(a)d^*a$ converges for any $W$ in $\pi$.\\
This by Fubini's theorem and smoothness of $W$, implies that $\int_{A_{n-2}}|W(a)|^2 \d_{N_{n-1}}^{-1}(a)d^*a$ is finite for any $W$ in $\pi$.
But the restriction of $\d_{N_{n-1}}$ to $A_{n-1}$ is equal to $\d_{N_{n-2}}\d_{U_{n-1}}$, so that the integral 
$\int_{A_{n-2}}|W(a)|^2 \d_{U_{n-1}}^{-1}\delta_{N_{n-2}}^{-1}(a)d^*a$ is finite for $W$ in $\pi$, which by Iwasawa decomposition again, implies that 
$\int_{N_{n-1}\backslash P_{n-1}} |\d_{U_{n-1}}^{-1/2} W(p)|^2 dp$ is finite.\\
The functions $\d_{U_{n-1}}^{-1/2} W$ belong to the space of $\phi^-(\pi)$, hence by induction, all the characters 
$c_{i_k,k}\d_{U_{k+1}}^{1/2}$ are positive for $k\in R$ with $k\leq n-2$. So we only need to check that the characters 
$c_{i_{n-1},n-1}\d_{U_{n}}^{1/2}$ are positive if $n-1\in R$. Suppose that one of them, $c_{1,n-1}\d_{U_{n}}^{1/2}$ for instance, wasn't.\\
Then, taking $v$ nonzero in $\Psi^-(\pi)$ such that $Z_{n-1}$ multiplies $v$ by $c_{1,n-1}$, according to Proposition \ref{Ker} and taking $W$ a preimage of $v$ in 
$\pi$, there is a positive
 integer $N_a$, such that  
$\d_{U_n}^{-1/2}(a)\pi(a)W(g)-c_{1,n-1}(a)W(g)$ is zero whenever for any $g$ in $G_{n-1}$ with $|z(g)|_F\leq q_F^{-N_a}$. 
As in Claim \ref{claim}, this implies that there is a positive integer $N$, such that $W(ag)$ is equal to 
$\d_{U_n}^{1/2}(a)c_{1,n-1}(a)W(g)$ whenever $|z(g)|_F \leq q_F^{-N}$ and $|a|_F\leq 1$. We recall that $W$ doesn't belong to 
$V(U_n,1)$ (otherwise $v$ would be zero), hence according to Proposition \ref{Ker}, there is $g_0$ in $G_{n-1}$ with 
$|z(g_0)|_F\leq q_F^{-N}$, such that $W(g_0)$ is nonzero. We denote by $W_0$ the function $\pi(g_0)W$, and we recall that the integral 
$$\int_{A_{n-1}}|W_0(a)|^2 \d_{N_{n-1}}^{-1}(a)d^*a= \int_{Z_1\times\dots\times Z_{n-1}}|W_0(z_1\dots z_{n-1})|^2 \d_{N_{n-1}}^{-1}(z_1\dots z_{n-1})dz_1\dots dz_{n-1},$$ 
is finite.
Hence the smoothness of $W_0$ and Fubini's theorem imply that the integral  
$$\int_{Z_{n-1}}|W_0(z_{n-1})|^2 \d_{N_{n-1}}^{-1}(z_{n-1})dz_{n-1}=\int_{Z_{n-1}}|W_0(z_{n-1})|^2 dz_{n-1}$$ is finite.
But for $|z_{n-1}|_F\leq 1$, the function $W_0(z_{n-1})$ is equal to $\d_{U_n}^{1/2}(z_{n-1})c_{1,n-1}(z_{n-1})W(g_0)$ with $W(g_0)$ nonzero, hence 
it is square integrable at zero if and only if $\d_{U_n}c_{1,n-1}^2$, thus $\d_{U_n}^{1/2}c_{1,n-1}$ is positive.\\
\end{proof}

\begin{rem}
The last proof more or less contains the following fact (which is more precisely a consequence of an induction, and the last step of the proof):\\ 
For every character $c_{i_k,n-k}$ appearing in a factor series of $\pi^{(n-k)}$ when this space is nonzero, there is $W$ in $V$, such that $W(z_k)$ is equal to 
$[c_{i_k,n-k}\d_{U_{k+1}}^{1/2}\dots \d_{U_{n}}^{1/2}](z_k)$ near zero. Hence this family of characters is minimal in the sense that 
each of them must occur in the expansion given in Proposition \ref{restorus} of some $W$ in $V$.  
\end{rem}

From this we deduce a characterization of the Whittaker functions in $L^2(Z_nN_n\backslash G_n)$.

\begin{cor}\label{l2g}
 Let $\theta$ be a nondegenerate character of the group $N_n$, and $\pi$ be a $\theta$-generic representation of $G_n$ with unitary central character, 
let the $c_{1,k},\dots, c_{r_k,k}$ be the central characters 
appearing in the factor series of $\tau=\pi^{(n-k)}$, for $k\in R$. Then the integral $$\int_{Z_nN_n\backslash G_n} |W(g)|^2 dg$$
converges for any $W$ in $\pi$ if and only if all the characters $c_{i_k,k}$ are positive.
\end{cor}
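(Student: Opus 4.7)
The plan is to deduce the corollary directly from Theorem \ref{l2p} by comparing the two integrals. Since $\pi$ has unitary central character, $|W|^2$ descends to $Z_nN_n\backslash G_n$, and the Iwasawa decomposition $G_n=N_nA_nK$ gives
$$\int_{Z_nN_n\backslash G_n}|W(g)|^2dg=\int_K\int_{Z_n\backslash A_n}|W(ak)|^2\d_{B_n}^{-1}(a)\,da\,dk.$$
Using Lemma \ref{torusdec}, $Z_n\backslash A_n\simeq A_{n-1}=Z_1\cdots Z_{n-1}$, and the decomposition $N_n=N_{n-1}U_n$ yields $\d_{B_n}|_{A_{n-1}}=\d_{N_{n-1}}\cdot\d_{U_n}|_{A_{n-1}}$. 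Compared with the torus integral appearing in the proof of Theorem \ref{l2p}, the only difference is therefore an extra factor $\d_{U_n}^{-1}(a)$.

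For the sufficiency of positivity, I would apply the asymptotic expansion of Theorem \ref{restorus}, which bounds $|W(a)|^2$ by a finite sum of products
$$\prod_{k=1}^{n-1}|c_{i_k,n-k}|^2(z_k)(\d_{U_{k+1}}\cdots\d_{U_n})(z_k)v_F(z_k)^{m_k}\phi_k(z_k),$$
and note that replacing $W$ by $\pi(k)W$ (for $k\in K$) leaves the central exponents of the derivatives unchanged, so the bound is uniform in $k$. The $A_{n-1}$-integral factors into a product over $k$ of integrals over $Z_k$. Using the identity $\d_{U_j}|_{Z_k}=\d_{U_{k+1}}|_{Z_k}$ for every $j>k$ (already used in the proof of Theorem \ref{l2p}), the modulus characters collapse:
$$(\d_{U_{k+1}}\cdots\d_{U_n}\cdot\d_{N_{n-1}}^{-1}\cdot\d_{U_n}^{-1})|_{Z_k}=1,$$
so each $Z_k$-integrand reduces to $|c_{i_k,n-k}|^2(z_k)v_F(z_k)^{m_k}\phi_k(z_k)$. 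This converges at infinity by compact support of $\phi_k$, and converges at zero iff $c_{i_k,n-k}$ is positive.

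For the necessity, I would invoke the minimality remark following Theorem \ref{l2p}: each character $c_{i_k,n-k}$ occurs as the exact leading behavior at $z_k=0$ of some $W_0$ in the model, namely $W_0(z_k)=[c_{i_k,n-k}\d_{U_{k+1}}^{1/2}\cdots\d_{U_n}^{1/2}](z_k)$. Substituting such a $W_0$ into the torus integral produces, after the same cancellation, a one-variable integral of the form $\int_{|z_k|\leq1}|c_{i_k,n-k}(z_k)|^2 d^*z_k$, which diverges as soon as $c_{i_k,n-k}$ is not positive. The main point requiring care is the equality $\d_{U_j}|_{Z_k}=\d_{U_{k+1}}|_{Z_k}$, which is a case-by-case root-combinatorial computation but is already explicit in the proof of Theorem \ref{l2p}; once granted, the rest of the argument is a direct adaptation of that proof.
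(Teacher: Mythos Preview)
Your argument is correct and follows the same route as the paper's proof: reduce to the torus integral via Iwasawa, use $\d_{N_n}|_{A_{n-1}}=\d_{N_{n-1}}\d_{U_n}$, and then invoke the content of Theorem \ref{l2p}. The only difference is packaging: the paper observes that the extra factor $\d_{U_n}^{-1}$ is exactly what one gets by replacing $\pi'$ with the twisted $P_n$-module $\d_{U_n}^{-1/2}\otimes\pi'$, and then applies the stronger statement proved inside Theorem \ref{l2p} directly to that twist; since $\d_{U_n}|_{Z_k}=\d_{U_{k+1}}|_{Z_k}$, the central characters of the derivatives shift from $c_{i_k,k}$ to $c_{i_k,k}\d_{U_{k+1}}^{-1/2}$, and the positivity condition $c_{i_k,k}\d_{U_{k+1}}^{-1/2}\cdot\d_{U_{k+1}}^{1/2}>0$ becomes $c_{i_k,k}>0$. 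You instead unpack the same cancellation by hand using Theorem \ref{restorus} and the minimality remark, which is perfectly fine but slightly longer. One small remark: your justification for uniformity in $k\in K$ (``the central exponents are unchanged'') is correct but roundabout; the cleaner reason is smoothness, which makes $\{\pi(k)W:k\in K\}$ a finite set.
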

\begin{proof}
By the Iwasawa decomposition, the integral $\int_{Z_nN_n\backslash G_n} |W(g)|^2 dg$ converges for every $W$ in $W(\pi,\theta)$ if and only if 
the $\int_{A_{n-1}} |W(a)|^2\d_{N_n}^{-1}(a) dg$ converges for every $W$ in $W(\pi,\theta)$.\\
As the character $\d_{N_n}$ restricts to $G_{n-1}$ as $\d_{N_{n-1}}\d_{U_n}$, this integral is equal to 
$$\int_{A_{n-1}} |\d_{U_n}^{-1/2}W(a)|^2\d_{N_{n-1}}^{-1}(a) dg.$$ But this integral converges for any $W$ in $W(\pi,\theta)$ if and only if so does the 
integral $$\int_{N_n\backslash P_n} |\d_{U_n}^{-1/2}W(p)|^2 dp$$ for any $W$ in $W(\pi,\theta)$.\\
By the statement in the proof of theorem \ref{l2p}, applied to $\d_{U_n}^{-1/2}\otimes \pi'$, this is the case if and only 
all the characters $c_{i_k,k}$.  

\end{proof}

Let $P$ be a standard proper parabolic subgroup of $G_n$, $U$ its unipotent radical, and $M$ its standard Levi subgroup. 
If $(\pi,V)$ is a smooth irreducible representation of $G_n$, one calls cuspidal exponent of $\pi$ with respect to 
$P$, a character $\chi$ of the center of $M$ 
such that the characteristic space of the Jacquet module $(V_U)_{\chi,\infty}$ is nonzero. Denoting by $\Delta$ the set of simple roots 
$\{\alpha_1,\dots,\alpha_n\}$ of $G_n$, 
We denote by $P^{\{i_1,\dots,i_t\}}$ the standard parabolic subgroup associated with the set of positive roots 
$\Delta-\{\alpha_{i_1},\dots,\alpha_{i_t}\}$, by $U^{\{i_1,\dots,i_t\}}$ its unipotent radical, by $M^{\{i_1,\dots,i_t\}}$ its standard Levi subgroup, 
which admits as a central subgroup the product $Z_{i_1}\dots Z_{i_t}$.\\
Notice that except for case D, for 
$\{i_1,\dots,i_t\}=\{2\}$, where we used the notation $U_{2,n-2}$ for $U^{\{1,2\}}$, the group $U^{\{k\}}$ is 
what we already denoted by $U_{k,n-k}$ before.\\
We denote by $A_{i_1,\dots,i_t}^-$ the set 
$$\{z_{i_1}\dots z_{i_t} \in Z_{i_1}\dots Z_{i_t}, \ |z_{i_k}|_F\leq 1, \ \text{and} \ |z_{i_1}\dots z_{i_t}|_F<1\}.$$

Theorem 4.4.6 of \cite{C} then asserts that $\pi$ with unitary central character is a discrete series 
representation if and only if, for every standard parabolic subgroup 
$P^{\{i_1,\dots,i_t\}}$, if $\chi$ is a cuspidal exponent of $\pi$ with respect to 
$P^{\{i_1,\dots,i_t\}}$, the restriction of $\chi$ to $A_{i_1,\dots,i_t}^-$ is less than $1$, or equivalently if $\chi$ restricted to 
$Z_{i_1}\dots Z_{i_t}$ is positive.\\  
We also notice that for any $k$, the Jacquet module $V_{U^{\{i_k\}}}$ surjects onto $V_{U^{\{i_1,\dots,i_t\}}}$, and that the 
character $\delta_{U^{\{i_1,\dots,i_t\}}}$ restricts to $Z_{i_k}$ as $\delta_{U^{\{i_k\}}}$, hence if $\chi$ is a cuspidal exponent of $\pi$ 
with respect to $P^{\{i_1,\dots,i_t\}}$, then $\chi_{|Z_{i_k}}$ is the restriction to 
$Z_{i_k}$ of a cuspidal exponent of $\pi$ with respect to $P^{\{i_k\}}$.\\
This implies that $\pi$ irreducible with unitary central character is a discrete series representation 
if and only if the cuspidal exponents of $\pi$ with respect to maximal 
parabolic subgroups $P^{\{i_k\}}$ have positive restriction to $Z_{i_k}$.\\

We call a character $\chi$ of $Z_k$ such that $(V^{(n-k)})_\chi$ (or equivalently $(V^{(n-k)})_{\chi,\infty}$) 
is nonzero an exponent of the derivative $(\pi^{(n-k)},V^{(n-k)})$. Now we recall that we showed in Proposition \ref{weights2}, 
that the $Z_k$ modules $V_{U_{k,n-k}}$ and $V^{(n-k)}$ have the same nonzero weight subspaces. This allows to prove in our four 
cases the following conjecture of Lapid and Mao (\cite{LM}, conjecture 3.5).

\begin{thm}\label{l2'}
 Let $\pi$ be a generic representation of $G_n$ with unitary central character and with Whittaker model $W(\pi,\theta)$, 
then the following statements are equivalent:

\begin{description}
 \item i) The integral $$\int_{N_nZ_n\backslash G_n} |W(g)|^2 dg$$
converges for any $W$ in $W(\pi,\theta)$.
\item ii) All the exponents of the derivatives of $\pi$ are positive.
\item iii) the representation $\pi$ is square-integrable.
\end{description}
\end{thm}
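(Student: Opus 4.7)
The equivalence $(i)\Leftrightarrow (ii)$ is immediate from Corollary \ref{l2g}, so the real content is $(ii)\Leftrightarrow (iii)$. For this, the plan is to combine the Casselman square-integrability criterion (in the form already reformulated just before the statement) with the identification, provided by Propositions \ref{weights} and \ref{weights2}, of the set of exponents of $\pi^{(n-k)}$ with the set of restrictions to $Z_k$ of cuspidal exponents of $\pi$ at the maximal parabolic $P^{\{k\}}$.

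Concretely, I would first recall: by Casselman, $\pi$ with unitary central character is square-integrable if and only if, for every $k\in\{1,\dots,n-1\}$, every cuspidal exponent of $\pi$ with respect to $P^{\{k\}}$ has positive restriction to $Z_k$. By definition, an exponent of $\pi^{(n-k)}$ is a character $\chi$ of $Z_k$ with $(V^{(n-k)})_\chi\neq 0$, equivalently $(V^{(n-k)})_{\chi,\infty}\neq 0$ since $V^{(n-k)}$ is of finite length and hence $Z_k$-finite. The next step is to match the two sets of characters of $Z_k$: Proposition \ref{weights} gives the inclusion of generalized $Z_k$-eigenvalues of $V_{U_{k,n-k}}$ into actual eigenvalues of $V^{(n-k)}$, and the reverse inclusion follows because $I_k\colon V_{U_{k,n-k}}\twoheadrightarrow V^{(n-k)}$ is a surjective $Z_k$-equivariant map of finite length $Z_k$-modules, so every (generalized) eigenvalue of the quotient lifts to one of the source. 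Proposition \ref{weights2} allows passage to the Whittaker realisation $V'^{(n-k)}$. Combined, these say that the set of exponents of $\pi^{(n-k)}$ equals the set of $Z_k$-restrictions of the generalized central characters appearing in $V_{U_{k,n-k}}$, which is exactly the set of restrictions to $Z_k$ of cuspidal exponents of $\pi$ at $P^{\{k\}}$.

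Granted this dictionary, positivity of all exponents of all derivatives in the sense of $(ii)$ translates term by term into Casselman's positivity condition at every maximal parabolic, which is $(iii)$. I expect the main obstacle to be the Case D exception at $k=2$ noted before the theorem, where $U_{2,n-2}$ denotes $U^{\{1,2\}}$ rather than $U^{\{2\}}$, so that $\pi^{(n-2)}$ a priori only controls cuspidal exponents at $P^{\{1,2\}}$ and not directly at $P^{\{2\}}$. The way to close this gap is to exploit the surjection $V_{U^{\{2\}}}\twoheadrightarrow V_{U^{\{1,2\}}}$ together with the special definition of $Z_2$ in case D, and to combine the derivatives $\pi^{(n-1)}$ and $\pi^{(n-2)}$ to control both $Z_1$- and $Z_2$-components of every generalized eigencharacter on $V_{U^{\{2\}}}$. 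Once this bookkeeping is done, the equivalence $(ii)\Leftrightarrow(iii)$ follows, completing the proof.
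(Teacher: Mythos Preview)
Your argument for $(i)\Leftrightarrow(ii)$ and for $(ii)\Leftrightarrow(iii)$ in cases A, B, C is correct and matches the paper's: Corollary~\ref{l2g} for the first equivalence, and Proposition~\ref{weights} together with the reduction of Casselman's criterion to maximal parabolics for the second.

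The gap is in your handling of case~D at $k=2$. You correctly spot that there $U_{2,n-2}=U^{\{1,2\}}$, so $\pi^{(n-2)}$ only sees the Jacquet module at the \emph{non}-maximal parabolic $P^{\{1,2\}}$. But your proposed fix---using the surjection $V_{U^{\{2\}}}\twoheadrightarrow V_{U^{\{1,2\}}}$ to control $V_{U^{\{2\}}}$---goes the wrong way. That surjection tells you that every $Z_2$-exponent of $V_{U^{\{1,2\}}}$ lifts to one of $V_{U^{\{2\}}}$, not conversely; so positivity of the exponents of $\pi^{(n-2)}$ does not by itself force positivity of all cuspidal exponents at the maximal parabolic $P^{\{2\}}$. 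Combining with $\pi^{(n-1)}$ does not help either, since that derivative controls $P^{\{1\}}$, not $P^{\{2\}}$.

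The paper closes this gap by a different idea: exploit the outer symmetry of the type~D Dynkin diagram exchanging $\alpha_1$ and $\alpha_2$. Swapping these roots produces a second family of derivative functors (with $U_2$ replaced by $U_{\alpha_2}$, $Z_1$ and $Z_2$ exchanged, and $\theta_3$ trivial on $U_{\alpha_2}$ instead of $U_{\alpha_1}$). For this second family one again has $(i)\Leftrightarrow(ii)'$; but $(i)$ is manifestly independent of the choice, so $(ii)\Leftrightarrow(ii)'$. In the swapped setup one has $V_{U_{1,n-1}}=V_{U^{\{2\}}}$, so $(ii)'$ directly controls the cuspidal exponents at $P^{\{2\}}$. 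Together, the two families of derivatives cover all maximal parabolics $P^{\{1\}},\dots,P^{\{n-1\}}$, and Casselman's criterion finishes the proof.
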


\begin{proof}
By assumption, the exponents of the derivatives of $\pi$ are the characters $c_{i_k,k}$ of corollary \ref{l2g}, hence
 i) $\Leftrightarrow$ ii) is corollary \ref{l2g}.\\
 ii) $\Leftrightarrow$ iii): we treat the case D separately, so assume first that $G_n$ is not $GSO(2(n-1),F)$.\\
By Proposition \ref{weights}, every cuspidal exponent of $\pi$ corresponding to $V_{U^{\{k\}}}$ is positive if and only if every exponent of the derivative 
$\pi^{(n-k)}$ is positive. But we have already seen that this implies that $\pi$ is a discrete series representation.\\
For the case $D$, we could have reversed the roles of the roots $\alpha_1$ and $\alpha_2$ (which correspond to the two symmetric roots at the end of the Dynkin diagram).
The only effect it would have is to change the definition of the derivative functors $\pi^{(n-2)}$ and $\pi^{(n-1)}$.
 Indeed $U_2$ would become $U_{\alpha_2}$, $Z_1$ and $Z_2$ would be exchanged. The character $\theta_3$ would have to be trivial on $U_{\alpha_2}$ instead of being 
trivial on $U_{\alpha_1}$. But i) and ii) would still be equivalent in this case, and i) is independent of these choices.\\
In both cases, the maps $I_k$ from $V_{U_{k,n-k}}$ to $V^{(n-k)}$ take nonzero weight subspaces 
to nonzero weight subspaces. For $n\geq3$, the space $V_{U_{k,n-k}}$ is equal to $V_{U^{\{k\}}}$. In the first case, $V_{U_{1,n-1}}$ is equal to 
$V_{U^{\{1\}}}=V_{U_{\alpha_1}}$, and it is equal to $V_{U^{\{2\}}}=V_{U_{\alpha_2}}$ in the second case. This implies that 
all the exponents of the derivatives of $\pi$ are positive if and only if all cuspidal exponents of $\pi$ with respect to maximal parabolic subgroups are positive.
Again this proves ii) $\Leftrightarrow$ iii).
 \end{proof}

\end{document}